 \numberwithin{equation}{section}
\newtheorem{theorem}{Theorem}[section]
\newtheorem{corollary}[theorem]{Corollary}
\newtheorem{lemma}[theorem]{Lemma}
\newtheorem{proposition}[theorem]{Proposition}
 \newtheorem{defn}[theorem]{Definition}
\newcommand{\bfs}{\boldsymbol}
\newcommand{\bdx}{\mathbf{x}}
\newcommand{\bdz}{{\bf z}}
\newcommand{\bdy}{{\bf y}}
\newcommand{\mcD}{{\mathscr D}}
\begin{document}

%
%
%
%
%
%
%
%
%

\title[$L^{p}$ compactness criteria]{$L^{p}$ compactness criteria with an application to variational convergence of some nonlocal energy functionals}
\author{Qiang Du}
\address{
Department of Applied Physics and Applied Mathematics, 
  and the Data Science Institute,
Columbia University, New York, NY 10027}
\thanks{}
\email{qd2125@columbia.edu}
\author{Tadele Mengesha}
\address{
Department of Mathematics,
University of Tennessee Knoxville, TN}\thanks{
}
\email{mengesha@utk.edu}
\author{Xiaochuan Tian}
\address{Department of Mathematics, University of California, San Diego, CA}
\email{xctian@ucsd.edu}

\subjclass{Primary 	46E30, 45F15; Secondary 74B99}

\keywords{$L^p$ compactness, system of singular integral equations, nonlocal equations}

\begin{abstract}
Motivated by some variational problems from a nonlocal model of mechanics, this work presents a set of sufficient conditions that guarantee a compact inclusion in the function space of $L^{p}$ vector fields defined on a domain $\Omega$ that is either a bounded domain in $\mathbb{R}^{d}$
or  $\mathbb{R}^{d}$ itself. The criteria are nonlocal and are given with respect to nonlocal interaction kernels that may not be necessarily radially symmetric. Moreover, these criteria for vector fields are also different from those given for 
 scalar fields in that the conditions are based on nonlocal interactions involving only parts of the components of the vector fields.  The $L^{p}$ compactness criteria {are} utilized in demonstrating {the} convergence of minimizers of parameterized {nonlocal} energy functionals.   

\end{abstract}

\maketitle

 \section{Introduction and main results}
 \subsection{Motivation}
{The present work is motivated by the study of 
nonlocal peridynamics models initially proposed by Silling 
in \cite{Silling}. In particular, the}
state-based peridynamics model given 
in \cite{Silling2007, Silling, Silling2010} postulates that the total strain energy  for constitutively 
linear, isotropic solid undergoing deformation is given by 
\begin{equation}\label{quad-energy}
\begin{aligned}
W_{\rho}({\bf u})&=\beta\,\int_{\Omega}\left(\mathfrak{D}_{\rho}({\bf u})(\bdx)\right)^2d\bdx \\
&+\alpha \int_{\Omega}\int_{\Omega} \rho(\bdx'-\bdx) \left( \mcD({\bf u})(\bdx,\bdx') -\frac{1}{d}\,\mathfrak{D}_{\rho}({\bf u})(\bdx)\right)^2d\bdx'\,d\bdx
\end{aligned}
\end{equation}
 where $\Omega\subset \mathbb{R}^d$ is a bounded domain occupied by the solid material, the kernel
 $\rho(|{\bfs \xi}|)$ 
  is a nonnegative locally integrable and radial weight function that measures the  interaction strength between  material particles at position $\bdx$ and $\bdx'$, ${\bf u}$ is a displacement field,
 $\mcD({\bf u})$ is a rescaled nonlocal operator on ${\bf u}$ defined by \cite{Du-NonlocalCalculus}
 \begin{equation}
 \label{mcd}
\mcD({\bf u})(\bdx,\bdx')=\frac{({\bf u}(\bdx')-{\bf u}(\bdx))}{|\bdx'-\bdx|}\cdot
 \frac{(\bdx'-\bdx)}{|\bdx'-\bdx|}=\frac{({\bf u}(\bdx')-{\bf u}(\bdx))^{\rm T}(\bdx'-\bdx)}{|\bdx'-\bdx|^2}
 \,,
 \end{equation}
 representing a (unit-less) linearized nonlocal strain \cite{Silling2010}
 and the operator $\mathfrak{D}_{\rho}$
 is a nonlocal linear operator (a weighted version
  of $\mcD$ \cite{Du-NonlocalCalculus,Du-Navier1}), called `nonlocal divergence', which is defined as
\begin{equation}\label{nonlocal-Div}
\mathfrak{D}_{\rho}({\bf u})(\bdx):= p.v. \int_{\Omega} \rho(\bdx'-\bdx) \mcD({\bf u})(\bdx,\bdx')d\bdx'
\end{equation}
which  is a means of incorporating the effect of the collective deformation of a neighborhood of $\bdx$ into the model. 
  The positive constants $\alpha$ and $\beta$  are proportional to the shear and bulk moduli of the material, respectively.  The quadratic energy in \eqref{quad-energy} is a generalization of the bond-based model that was introduced in \cite{Silling} and studied in \cite{Aksoylu-Mengesha,Du-Zhou2011,Emmrich-Weckner2007,Mengesha-Du,Du-Zhou2010} that takes in to account the linearized strain due to the dilatation
and the deviatoric portions of the deformation. Mathematical analysis of  linearized peridynamic models have been extensively studied in \cite{Du-Review1,Du-NonlocalCalculus,Du-Navier1, Du-Zhou2011, Emmrich-Weckner2007, Mengesha-Du, Mengesha-Du-Royal, Du-Zhou2010} along
with results geared towards nonlinear models in 
 \cite{EP13,Lipton14,Mengesha-Du-non,  
Carlos, Carlos2,coclite2018}.

For  $\rho \in L^{1}_{loc}$, it is not difficult to show (see Lemma \ref{W-integrand} below) that the energy space associated with the energy functional $W_\rho$, $\{{\bf u}\in L^{2}(\Omega;\mathbb{R}^{d}): W_{\rho}({\bf u}) < \infty \}$, is precisely 
 \begin{equation}\label{energy-space-p=2}
 \mathcal{S}_{\rho, 2}(\Omega) = \left\{{\bf u}\in L^{2}(\Omega; \mathbb{R}^{d}):  |{\bf u}|^{2}_{\mathcal{S}_{\rho   {, 2}}} < \infty\right\}\,,
 \end{equation}
 where the seminorm  $|{\bf u}|_{\mathcal{S}_{\rho,2}}$ is defined by
\[ |{\bf u}|^{2}_{\mathcal{S}_{\rho,2}} := \int_{\Omega } \int_{\Omega } \rho(\bdy - \bdx)\left|\frac{({\bf u}(\bdy) - {\bf u} ({\bdx}))}{|\bdy-\bdx|} \cdot \frac{(\bdy -\bdx)}{|\bdy - \bdx|}\right|^{2}d\bdy d\bdx. 
\]
Notice that $|{\bf u}|_{\mathcal{S}_{\rho, 2}} = 0, $ if and only if ${\bf u}$ is an infinitesimal rigid vector field. We denote  the
class of infinitesimal rigid displacements by
\[
\mathcal{R}:= \{{\bf u}: {\bf u}({\bf x}) = \mathbb{B} {\bf x} + {\bf v}, \mathbb{B}^{T} = -\mathbb{B}, {\bf v} \in \mathbb{R}^{d}\}.
\]
It has been shown in \cite{Mengesha-Du, Mengesha-Du-non} that $\mathcal{S}_{\rho, 2}(\Omega) $ with the natural norm
\[
\|{\bf u}\|_{\mathcal{S}_{\rho,2}} = {(\|{\bf u}\|^2_{L^2} + |{\bf u}|^2_{\mathcal{S}_{\rho, 2}})^{1/2}}
\]
is a separable Hilbert space.  In the event that $\rho({\bfs \xi})|{\bfs \xi}|^{-2} \in L^{1}_{loc}(\mathbb{R}^{d})$, then the space $\mathcal{S}_{\rho,2}(\Omega)$ coincides with $L^{2}(\Omega, \mathbb{R}^{d})$. Otherwise, $\mathcal{S}_{\rho,2}(\Omega)$ is a proper subset of $L^{2}(\Omega, \mathbb{R}^{d})$     {that is}, nevertheless, big enough to contain $W^{1,2}(\Omega;\mathbb{R}^{d})$
and there exists a constant $C=  C(d, 2, \Omega)>0$   such that 
 \[
 |{\bf u}|^{2}_{\mathcal{S}_{\rho,2}} \leq C  \|\text{Sym}(\nabla {\bf u})\|^{2}_{L^{2}} \|\rho\|_{L^{1}(\mathbb{R})} \quad\quad\forall {\bf u}\in W^{1, 2}(\Omega; \mathbb{R}^{d})
 \]
 where $\text{Sym}(\nabla {\bf u}) = \frac{1}{2} (\nabla {\bf u} + \nabla {\bf u} ^{T})$ is the symmetric part of the gradient $\nabla {\bf u}$.

Under the additional assumption that  $\rho$ is positive in a small neighborhood of the origin,  
it is shown in \cite[Theorem 1]{Mengesha-Du}, via     {an} application of Lax-Milgram, that for any applied load ${\bf f}\in L^{2}(\Omega;\mathbb{R}^{d})$, the potential energy 
\begin{equation}\label{pot-p=2}
E_\rho({\bf u}) = W_{\rho} ({\bf u})  -\int_{\Omega} {\bf f}\cdot {\bf u} dx
\end{equation}
has a minimizer over any weakly closed subset  $V$ of $ \mathcal{S}_{\rho, 2}(\Omega) $ such that $V\cap \mathcal{R} = \{{\bfs 0}\}$. See also  \cite[Theorem 1.1]{Mengesha-Du-non} for the more general convex energies of $p$-growth. 

The analysis of the convergence of variational problems of the type in \eqref{pot-p=2} associated with a sequence of parameterized kernels has garnered a lot of attention in recent years.   Namely, if we have a sequence of locally integrable radial kernels $\rho_{n}$, how do the associated potential energies ${E_{\rho_n}}$, as well as their minimizers behave as $n\to \infty$? Clearly, this will depend first on the behavior of  the convergence properties of the sequence of kernels. In fact, it is shown in \cite{Mengesha-Du-non}  that if the sequence of $L^{1}$ kernels $\{\rho_n\}$ converge in {\em the sense of measures} to a measure with atomic mass at $0$ (Dirac-measure at $0$) and for each $n$, $r^{-2}\rho_{n}(r)$ is nonincreasing, then the sequence $\{E_{\rho_n}\}$ variationally converges to the classical Navier-Lam\'e potential energy $E_0$ given by 
\[
E_{0}({\bf u}) =\mu \int_{\Omega}|\text{Sym}(\nabla {\bf u})|^2 dx + \frac{\lambda}{2}\int_{\Omega} (\text{div} ({\bf u}))^2 dx - \int_{\Omega} {\bf f}\cdot {\bf u} dx,
\]
 where $\mu$ and $\lambda$ are constants that can be expressed in terms of $\alpha$ and $\beta$. 
 This is what is called nonlocal-to-local convergence and the result is used as a rigorous justification that state-based peridynamics modeling recovers the classical linearized elasticity models in the event of vanishing nonlocality. 
 
 In this paper, we consider another type of convergence of sequence of kernels and study the behavior of the associated energy functionals, which leads to nonlocal-to-nonlocal convergence. More specifically, suppose we are given a nonnegative kernel $\rho\in L^{1}_{loc}(\mathbb{R}^d)$ with the property that 
 \begin{equation}\label{rho-inc-rad1-p=2}
 \text{$\rho $ is {\em radial}, $\rho({\bfs \xi})>0$ \text{for ${\bfs \xi}$ is close to ${\bfs 0}$}, $|{\bfs \xi}|^{-2}\rho(|{\bfs \xi}|)$ is {\em nonincreasing} in $|{\bfs \xi}|$,}
 \end{equation}
 and 
 \begin{equation}\label{suff-cond-p=2}
 \lim_{\delta \to 0}\delta^{2} \left(\displaystyle \int_{B_{\delta}}  \rho({\bfs \xi})d{\bfs \xi} \right)^{-1} = 0. 
 \end{equation}
and consider a sequence of nonnegative, radial kernels $\{\rho_{n}\}_{n\in \mathbb{N}}$ each satisfying  
\eqref{rho-inc-rad1-p=2} and that 
 \begin{equation}\label{condition-on-seq-ker-p=2}
  \rho_n\leq \rho\quad \text{a.e. and } \rho_{n}\to \rho \quad\text{a.e. in $\mathbb{R}^{d}$}. 
 \end{equation}
 It then follows that $\rho_n \to \rho$ strongly in $L^{1}_{loc}(\mathbb{R}^d)$ as $n\to \infty$. 
We will establish a clear connection between the sequence of energies $\{E_{\rho_n}\}$ and $E_{\rho}$.  Most importantly, we will show that minimizers of the energies $E_{\rho_n}$ over an admissible class will converge to a minimizer of $E_{\rho}$ over the same admissible class.   
The notion of variational convergence we use is $\Gamma$-convergence (see \cite{DalMaso}) which we define below. The advantage of $\Gamma$-convergence is that under the additional assumption of equicoercivity of the functionals it implies the convergence of minimizers as well \cite[Theorem 7.8 and Corollary 7.202]{DalMaso}.   
 \begin{defn}\label{defn-gamma}
Suppose that $\overline{E}_{n}: L^{2}(\Omega;\mathbb{R}^{d})\to \mathbb{R}\cup \{\infty\}$,  $\forall 1\leq n\leq \infty$.  We say that the sequence $\overline{E}_{n}$ $\Gamma$- converges to $\overline{E}_{\infty}$ in the  $L^{2}$-topology if and only if
\begin{itemize}
\item [a)] for every sequence $\{{\bf u}_{n}\}\in L^{2}(\Omega;\mathbb{R}^{d})$ with ${\bf u}_{n}\to {\bf u}$ in $L^{2}(\Omega;\mathbb{R}^{d}),$ as $n\to\infty,$ we have $\overline{E}_{\infty}({\bf u})\leq \liminf_{n\to \infty}\overline{E}_{n}({\bf u}_{n})$,
    \item [b)] and for every ${\bf u}\in L^{2}(\Omega;\mathbb{R}^{d})$ there exists a {recovery} sequence ${\bf u}_{n}\to {\bf u}$ in $L^{2}(\Omega;\mathbb{R}^{d}),$ such that $\overline{E}_{\infty}({\bf u})=\lim_{n\to \infty}\overline{E}_{n}({\bf u}_{n}).$ 
\end{itemize}
\end{defn}

The following is one of the main results of the paper on the variational limit of the nonlocal functionals $\{\overline{E}_{n}\}$. 

 \begin{theorem} \label{thm-G-convergence-intro} 
Suppose $\rho$ and $\{\rho_n\}$ satisfies \eqref{rho-inc-rad1-p=2}, \eqref{suff-cond-p=2}, and \eqref{condition-on-seq-ker-p=2}. 
The sequence of functionals $\overline{E}_{n}$ $\Gamma-$converges in the strong $L^{2}(\Omega;\mathbb{R}^{d})$ topology to the functional $\overline{E}_{\rho}$.
where 
the extended functionals $\{\overline{E}_{n}({\bf u})\}_{n\leq \infty}$  are defined as
\begin{equation}
\label{extended}
\overline{E}_{n}({\bf u}) =\left\{\begin{array}{l}
E_{\rho_n}({\bf u}), \quad\text{if ${\bf u}\in \mathcal{S}_{\rho_n, 2}(\Omega)$,}\\
\infty,\quad\text{ if ${\bf u}\in L^{2}(\Omega;\mathbb{R}^{d})\setminus \mathcal{S}_{\rho_n, 2}(\Omega) $},
\end{array}\right.
\end{equation}
where $\mathcal{S}_{\rho_n, 2}(\Omega)$ and $\{E_{\rho_{n}}\}_{n<\infty}$ are defined as before in  \eqref{energy-space-p=2} and \eqref{pot-p=2}, respectively, where $\rho$ is replaced by $\rho_n$. The extended functional $\overline{E}_{\rho}$ is similarly defined. 
Moreover, if $V$ is a weakly
closed subset of $L^{2}(\Omega;\mathbb{R}^{d})$ such that $V\cap \mathcal{R} = \{\bfs 0\}$, and  for each n, ${\bf u}_{n}$ minimizes $E_{\rho_n}$ over $V\cap \mathcal{S}_{\rho_n, 2}(\Omega)$, then 
the sequence $\{{\bf u}_n\}$ is precompact in $L^{2}(\Omega;\mathbb{R}^{d})$ with any limit point belonging to $\mathcal{S}_{\rho,2}(\Omega)$ and minimizing $E_{\rho}$ over $V\cap \mathcal{S}_{\rho,2}(\Omega)$. 
 \end{theorem}

Although the discussion above is focused on the case of quadratic peridynamic energies for ease of explaining the main ideas, the result can naturally be extended to {\em small strain nonlocal nonlinear peridynamic models} with $p$-growth, for $p\geq 2$, that have been introduced in \cite{Silling2007} and whose variational analysis investigated in \cite{Mengesha-Du-non}.

We will prove Theorem \ref{thm-G-convergence-intro} in the sections that follow. But we would like to highlight that this result has an important implication in the numerical approximation of minimizers of  $E_{\rho}$ over an admissible class. 
Indeed, compactness results have been quite useful for analyzing numerical approximations of nonlocal problems in various contexts such as \cite{Asy-compatible,Tian-Du,coclite2022}. In the context discussed in this work, let
us take for an example that $\rho({\bfs \xi})  = \frac{1}{|{\bfs \xi}|^{d + 2(s-1)}}$ for $s\in (0,1)$. This kernel satisfies \eqref{rho-inc-rad1-p=2} and \eqref{suff-cond-p=2}. It is also clear that $\rho({\bfs \xi})|{\bfs \xi}|^{-2}$ is not integrable on any bounded domain containing ${\bfs 0}$.  In the event $\Omega$ has a smooth boundary, the energy space $\mathcal{S}_{\rho,2}(\Omega)$ coincides with  the fractional Sobolev space $H^{s}(\Omega;\mathbb{R}^d)$ see \cite{Mengesha-Scott1, Mengesha-Scott2}. In particular, if ${1/2} < s < 1$, then all functions in $\mathcal{S}_{\rho,2}(\Omega)$ have continuous representative. Now, if  
$V\subset\mathcal{S}_{\rho,2}(\Omega)$ is a weakly
closed subset of $L^{2}(\Omega;\mathbb{R}^{d})$ such that $V\cap \mathcal{R} = \{\bfs 0\}$, a minimizer of $E_{\rho}$ over $V$ exists (and will be in $H^{s}(\Omega;\mathbb{R^d})$).  The analysis of the existence and uniqueness of the minimizer ${\bf u}$ of this quadratic energy can also, equivalently, be found by solving the corresponding Euler-Lagrange equation. The latter gives us a way of numerically solving the solution by writing it first in the weak form and then  {applying} the Galerkin approach of choosing a finite-dimensional subspace $\mathcal{M} \subset V$ to solve for a projected
solution of ${\bf u}$ on $\mathcal{M}$. Notice that for $s\in (0, 1/2)$ the finite-dimensional subspace $\mathcal{M}$ can contain discontinuous functions, while for $s\in (1/2, 1)$, all the elements of $\mathcal{M}$ must be continuous in order for $\mathcal{M}$ to be conforming, that is, for $\mathcal{M}\subset \mathcal{S}_{\rho,2}(\Omega)$ .  In the    {latter} case, if one wants to employ the advantageous discontinuous Galerkin approximation, which is now nonconforming, one needs to find 
   {an} effective way to implement it to the model problem.  The result in Theorem \ref{thm-G-convergence-intro} will allow us to develop approximation schemes by solving a sequence of Euler-Lagrange equations of modified energies.  To demonstrate this, define the sequence of kernels
\[
\rho_{n}({\bfs \xi}) 
= \left\{\begin{aligned}\rho({\bfs \xi}) & &\text{if $\rho({\bfs \xi})\leq n\,|{\bfs \xi}|^{2}$}\\
 n \,|{\bfs \xi}|^{2} &&\text{if $\rho({\bfs \xi}) \geq n \,|{\bfs \xi}|^{2}. $}\end{aligned}
\right.
\]
It is not difficult to check that, for each $n$, $\rho_{n}$  satisfies \eqref{rho-inc-rad1-p=2}, \eqref{condition-on-seq-ker-p=2}, and that the functions $\rho_{n}({\bfs \xi})|{\bfs \xi}|^{-2} \in L^{1}_{loc}(\mathbb{R}^{n})$ are just truncations of the fractional kernel $|{\bfs \xi}|^{-d-2s}$ (at level $n$).  As discussed before, the energy space associated with $E_{\rho_n}$ will coincide with $L^{2}(\Omega;\mathbb{R}^{d})$ and a unique minimizer ${\bf u}_n \in V\cap \mathcal{S}_{n}$ of $E_{\rho_n}$ exists.  Since the admissible space is a subspace of  $L^{2}(\Omega;\mathbb{R}^{d})$ that avoids nontrivial infinitesimal rigid displacements, we may use discontinuous finite element spaces, 
    {denoted by} $\mathcal{M}_{n, h}$, for     {the} standard conforming Galerkin approximation to   
the solution of the Euler-Lagrange equation associated with the energy $E_{\rho_n}.$ This, in turn, can be viewed as a     {nonconforming} discontinuous Galerkin scheme to numerically solve the original Euler-Lagrange equations when the discretization parameter $h$ goes to zero and at the same time the truncation level $n$ goes to infinity.  Intuitively, for large $n$, by Theorem \ref{thm-G-convergence-intro}, ${\bf u}$ is approximated by ${\bf u}_n$ (in the $L^{2}$ norm), and then ${\bf u}_n$ will be numerically approximated by ${\bf u}_{n, h}\in \mathcal{M}_{n, h}$. The proper convergence analysis of this nonconforming numerical scheme as $h\to 0$ and $n\to \infty$ simultaneously has been carried out in \cite{Tian-Du} in the special case of scalar nonlocal problems when the subspace $V$ is the set of 
    {scalar-valued functions}
${\bf u}\in L^{2}(\Omega;\mathbb{R}^{d})$ such that ${\bf u} $ vanishes outside of a fixed set $\Omega'$ which is compactly contained in $\Omega$. The  analysis in \cite{Tian-Du}     {makes} use of the framework of asymptotically compatible schemes for parameter-dependent problems first developed in \cite{Asy-compatible} and the vanishing of the functions in the admissible class around the volumetric-boundary  $\Omega\setminus \Omega'$ is crucial for employing certain compactness arguments.  To extend the convergence analysis in \cite{Tian-Du} to  the case of a system of strongly coupled nonlocal equations,  
the variations problems associated with \eqref{pot-p=2},
solved over any admissible set that does not include infinitesimal rigid vector fields, Theorem \ref{thm-G-convergence-intro} as well as some of the compactness results we prove in this paper will be crucial. Such analysis on nonconforming discontinuous Galerkin numerical schemes 
{to  
systems of nonlocal equations under discussion will be carried out in a} future work.  

 Although it is beyond the scope of this work, in passing, we would like to note that this way of developing a nonconforming numerical scheme is also applicable to fractional PDEs where singular kernels are more common
\cite{Bonito18,TDG2016}. The idea is the same where we use less singular kernels with truncation both at origin and at infinity to do approximation of fractional PDEs. In this case, sequential compactness of nonlocal spaces  associated with the truncated fractional kernel together with the compact embedding of fractional Sobolev spaces in $L^{p}$ can be used to carry out the analysis of the resulting asymptotically compatible schemes {\cite{Asy-compatible,TDG2016}}. 
 
\subsection{Other main results}
\subsubsection{$L^{p}$ Compactness}
The proof of  Theorem \ref{thm-G-convergence-intro} fundamentally depends on some structural properties of the nonlocal space $\mathcal{S}_{\rho, 2}(\Omega)$,     {chief} among them are compact embedding into $L^{2}(\Omega;\mathbb{R}^d)$ and a Poincar\'e-type inequality, which we will establish in this paper. In fact, these properties remain true even for the spaces $\mathcal{S}_{\rho, p}(\Omega)$, where for $1\leq p<\infty$, 
\[
\mathcal{S}_{\rho, 2}(\Omega) = \{{\bf u}\in L^{p}(\Omega;\mathbb{R}^d): |{\bf u}|_{\mathcal{S}_{\rho,p}} < \infty\}     {,} 
\]
    {and} $|{\bf u}|_{\mathcal{S}_{\rho,p}} ^{p} = \int_{\Omega } \int_{\Omega } \rho(\bdy - \bdx)\left|\frac{({\bf u}(\bdy) - {\bf u} ({\bdx}))}{|\bdy-\bdx|} \cdot \frac{(\bdy -\bdx)}{|\bdy - \bdx|}\right|^{p}d\bdy d\bdx$ gives a semi norm.  
It is shown in \cite{Mengesha,Mengesha-Du-non} that, for any $1\leq p < \infty$, 
 $ \mathcal{S}_{\rho, p}(\Omega)$ is a separable Banach space with {the} norm 
 $$\|{\bf u}\|_{\mathcal{S}_{\rho,p}} =\left(\|{\bf u}\|_{L^{p}}^{p} +  |{\bf u}|^{p}_{\mathcal{S}_{\rho,p}}\right)^{1/p},$$
    {and is} reflexive if $1 < p < \infty$ and  a Hilbert space for 
 $p =2$.   If $|{\bfs \xi}|^{-p}\rho({\bfs \xi}) \in L_{loc}^{1}(\mathbb{R}^{d})$, then 
 a simple calculation shows that
  $ \mathcal{S}_{\rho, p}(\Omega) = L^{p}(\Omega;\mathbb{R}^{d})$.   On the other hand, in the case where $|{\bfs \xi}|^{-p}\rho({\bfs \xi}) \notin L^{1}_{loc}(\mathbb{R}^{d})$, $ \mathcal{S}_{\rho, p}(\Omega)$ is a proper subset of $L^{p}(\Omega;\mathbb{R}^{d})$.   Under some extra assumptions on the kernel $\rho$,  the space is known to support a Poincar\'e-Korn type inequality over subsets that have trivial intersections with $\mathcal{R}$. 
  These functional analytic properties of the nonlocal space can
  be used to demonstrate    {the} well-posedness of some nonlocal variational problems using 
  the direct method of calculus of variations, see \cite{Mengesha-Du-non} for more discussions.

 As in the case of $p=2$, we assume that for a given $1\leq p<\infty,$ 
 \begin{equation}\label{rho-inc-rad}
\text{$\rho $ is {\em radial}, $\rho(r)>0$ for $r$ is close to $0$},  \text{and $r^{-p}\rho(r)$ is {\em nonincreasing} in $r$,}
 \end{equation} and 
  \begin{equation}\label{suff-cond}
 \lim_{\delta \to 0}{\delta^{p} }\left({\displaystyle \int_{B_{\delta}({\bfs 0}) } \rho({\bfs \xi})d{\bfs \xi} }\right)^{-1}= 0. 
 \end{equation}
We now state the compactness result whose proof is one of the main objectives of the present work.  
  \begin{theorem}[$L^{p}$ compactness]\label{main-compactness}
 Let $1\leq p<\infty$  and let $\rho \in L^{1}_{loc}(\mathbb{R}^{d})$ be nonnegative and satisfying \eqref{rho-inc-rad} and \eqref{suff-cond}.   
 Suppose also that $\Omega\subset \mathbb{R}^{d}$ is a domain with Lipschitz boundary. 
 Then $\mathcal{S}_{\rho,p}(\Omega)$ is compactly embedded in $L^{p}(\Omega;\mathbb{R}^{d})$. That is,  any bounded sequence $\{{\bf u}_{n}\}$ in $\mathcal{S}_{\rho,p}(\Omega)$  is precompact in $L^{p}(\Omega;\mathbb{R}^{d})$.  Moreover, any limit point  is in $\mathcal{S}_{\rho,p}(\Omega)$. 
 \end{theorem}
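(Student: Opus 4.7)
The plan is to upgrade the known $L^p_{loc}(\Omega)$-compactness of \cite[Theorem 2.3]{Mengesha-Du-non} to full $L^p(\Omega;\mathbb{R}^d)$-compactness by constructing a bounded extension operator
\begin{equation*}
E:\mathcal{S}_{\rho,p}(\Omega)\ \longrightarrow\ \mathcal{S}_{\tilde\rho,p}(\tilde\Omega),
\end{equation*}
where $\tilde\Omega$ is a bounded open set with $\overline\Omega\subset\tilde\Omega$ and $\tilde\rho$ is a nonnegative kernel on $\mathbb{R}^d$ that still obeys \eqref{rho-inc-rad} and \eqref{suff-cond}. Once such an $E$ is available, a bounded sequence $\{\bdu_n\}$ in $\mathcal{S}_{\rho,p}(\Omega)$ maps to a bounded sequence $\{E\bdu_n\}$ in $\mathcal{S}_{\tilde\rho,p}(\tilde\Omega)$, to which the local compactness on $\tilde\Omega$ applies; since $\overline\Omega\Subset\tilde\Omega$, one extracts a subsequence converging in $L^p(\Omega;\mathbb{R}^d)$. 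The limit lies in $\mathcal{S}_{\rho,p}(\Omega)$ by the standard lower semicontinuity of the seminorm under $L^p_{loc}$ convergence, verified via Fatou's lemma applied to an a.e.-convergent subsequence.

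To construct $E$, I would proceed in the standard two-step manner. First, on the half-space $\Omega_+=\{x_d>0\}$ use the Korn-type reflection
\begin{equation*}
E_0\bdu(x',x_d) := R\,\bdu(x',-x_d)\quad\text{for}\ x_d<0,\qquad R:=\mathrm{diag}(1,\ldots,1,-1),
\end{equation*}
which flips the normal component so that the projected-difference-quotient structure transforms predictably across the reflecting hyperplane. Second, for a general bounded Lipschitz $\Omega$, cover $\partial\Omega$ by finitely many open balls $\{B_i\}$ in which $\Omega\cap B_i$ is a Lipschitz subgraph, use bi-Lipschitz straightening maps $\Phi_i$ to reduce locally to the half-space case, apply $E_0$ in the straightened coordinates, and glue with a subordinate partition of unity. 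The bi-Lipschitz distortion enlarges $\rho$ into a kernel $\tilde\rho$, but radial monotonicity and near-origin behavior are preserved up to constants, so $\tilde\rho$ still satisfies \eqref{rho-inc-rad}--\eqref{suff-cond}.

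The main obstacle is the half-space seminorm bound $|E_0\bdu|_{\mathcal{S}_{\rho,p}(\mathbb{R}^d)}\leq C\,|\bdu|_{\mathcal{S}_{\rho,p}(\Omega_+)}$ in the mixed quadrant $\{x_d>0,\ y_d<0\}$. There $|y-x|\ne|y^*-x|$ for $y^*:=(y',-y_d)$, and $(E_0\bdu(y)-E_0\bdu(x))\cdot(y-x)$ does not simply equal $(\bdu(y^*)-\bdu(x))\cdot(y^*-x)$. The resolution is an algebraic decomposition that writes the former as the latter plus a lower-order ``commutator'' involving only pointwise values of the normal component $\bdu_d$, combined with the elementary geometric fact $|y-x|\geq|y^*-x|$ and the monotonicity of $|\bfs\xi|^{-p}\rho(\bfs\xi)$ supplied by \eqref{rho-inc-rad}, which yields $\rho(y-x)\,|y^*-x|^p\leq \rho(y^*-x)\,|y-x|^p$. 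The leading term then folds into the seminorm of $\bdu$ on $\Omega_+$ after the change of variable $y\mapsto y^*$, while the commutator contributes an $L^p(\Omega_+)$ remainder. The subsequent pull-back through the bi-Lipschitz straightening maps and the partition-of-unity gluing introduce further routine distortions that are absorbed into $\tilde\rho$ and into an additive $L^p$ term, completing the construction.
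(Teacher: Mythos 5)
Your proposal and the paper take genuinely different routes. The paper, following Ponce, deliberately \emph{avoids} extension: it first obtains $L^p_{loc}$ compactness via the Riesz--Fr\'echet--Kolmogorov criterion (Theorem \ref{loc-compactness-S}), and then proves the boundary estimate of Lemma \ref{boundary-control-apendix}, which bounds the total $L^p$ mass by the interior mass plus $\frac{r^p}{\int_{B_r}\rho}\,|\cdot|^p_{\mathcal{S}_{\rho,p}}$, working entirely with rays $t\mapsto \bdu({\bfs\xi}+t{\bf v})\cdot{\bf v}$ that stay inside the uniform interior cones of $\Omega$. The one-dimensional differences $g(s+t)-g(s)$ along such a ray are exactly the projected differences $\mathscr{D}(\bdu)$ at pairs of interior points, so the estimate closes without ever leaving $\Omega$. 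Your plan instead goes through the Bourgain--Brezis--Mironescu extension route, which the paper discusses and then rejects precisely because of what happens for the \emph{projected} difference quotient.

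The gap in your argument is the claim that the reflection commutator is a lower-order $L^p$ term. A direct computation in the half-space, with $x_d>0$, $y_d<0$, $y^*=(y',-y_d)$ and $E_0\bdu(y)=R\bdu(y^*)$, gives
\begin{equation*}
\bigl(E_0\bdu(y)-\bdu(x)\bigr)\cdot(y-x)
=\bigl(\bdu(y^*)-\bdu(x)\bigr)\cdot(y^*-x)+2x_d\,u_d(y^*)-2y_d\,u_d(x).
\end{equation*}
Since $|y-x|\geq\max(x_d,\,|y_d|)$ in the mixed quadrant, the commutator divided by $|y-x|^2$ is of size $|u_d(y^*)|/|y-x|+|u_d(x)|/|y-x|$, i.e.\ it has the \emph{same} homogeneity as the projected difference quotient, not one order less. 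When you multiply by $\rho(y-x)$ and integrate over the mixed quadrant, this produces terms of the form $\int |u_d|^p\bigl(\int_{\{|\xi|>x_d\}}\rho(\xi)|\xi|^{-p}\,d\xi\bigr)\,dx$, which are controllable by $\|\bdu\|^p_{L^p}$ only if $\rho(\xi)|\xi|^{-p}\in L^1$ --- but that is exactly the degenerate regime $\mathcal{S}_{\rho,p}(\Omega)=L^p(\Omega;\mathbb{R}^d)$, where no compact embedding can hold. In the interesting regime the commutator cannot be absorbed into an $L^p$ remainder, and it is also not controlled by the seminorm of $\bdu$, because $u_d(y^*)-u_d(x)$ is a full component difference rather than a projection of $\bdu(y^*)-\bdu(x)$ onto the direction $y^*-x$. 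The same obstruction appears with the even reflection $E_0\bdu(y)=\bdu(y^*)$, whose commutator is $2y_d\bigl(u_d(y^*)-u_d(x)\bigr)$. A secondary difficulty with the proposal is that the bi-Lipschitz straightening maps do not preserve the direction onto which one projects: after pulling back, $\mathscr{D}(\bdu\circ\Phi^{-1})(\Phi(x),\Phi(y))$ projects $\bdu(y)-\bdu(x)$ onto $\Phi(y)-\Phi(x)$, not onto $y-x$, so the transformed quantity is not an instance of $\mathscr{D}(\bdu)$ and cannot be absorbed by merely redefining the kernel. The paper's interior-ray argument is designed precisely to sidestep both of these problems.
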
    
    {The condition given by} \eqref{suff-cond} requires $\rho$ to have     {an} adequate singularity near ${\bfs 0}$.  A straightforward calculation shows that the kernels satisfying \eqref{suff-cond} include 
 $\rho({\bfs \xi}) = |{\bfs \xi}|^{-(d + p(s-1))}$, for any $p\in [1, \infty)$, and any 
 $s\in (0,1)$, and $\rho({\bfs \xi}) = -|{\bfs \xi}|^{p-d} \ln(|{\bfs \xi}|)$.  
 It is no surprise that \eqref{suff-cond}  is violated if $ |{\bfs \xi}|^{-p}\rho({\bfs \xi})$ is a locally  integrable function (and therefore, $\mathcal{S}_{\rho, p}(\Omega) = L^{p}(\Omega;\mathbb{R}^{d})$), and  in fact, in this case
 \[
 \liminf_{\delta \to 0}{\delta^{p} }\left({\displaystyle \int_{B_{\delta}({\bfs 0}) } \rho({\bfs \xi})d{\bfs \xi} }\right)^{-1} = \infty,\]
     {see \cite{Mengesha-Du-non}}. 
 It is not clear whether condition \eqref{suff-cond} is necessary for compact embedding even for the class of kernels that are radial and nonincreasing.   There are radial kernels   with the property that $|{\bfs \xi}|^{-p}\rho({\bfs \xi})$ is (locally) nonintegrable, and 
 $$
 \lim_{\delta \to 0}{\delta^{p} }\left({\displaystyle \int_{B_{\delta} ({\bfs 0})} \rho({\bfs \xi})d{\bfs \xi} } \right)^{-1}= c_{0} > 0
 $$
  for which we do not know whether there is a compact embedding $\mathcal{S}_{\rho, p}(\Omega)$ into $L^{p}(\Omega;\mathbb{R}^{d})$. One such kernel is $\rho({\bfs \xi}) = |{\bfs \xi}|^{p-d}$. Nevertheless, we can prove that the associated space $\mathcal{S}_{\rho,p}(\Omega)$ is compact in the $L^{p}_{loc}$ topology, a result which we will state and prove in the appendix.

\subsubsection{Compactness criteria that involve     {a} sequence of kernels}
 For scalar fields,  compactness results     {like those stated above} are commonplace for spaces corresponding to special kernels such as the standard fractional Sobolev spaces.  
 In \cite[Lemma 2.2]{Mengesha-Du-AIMS}, for more general radial and monotone decreasing kernels $\rho$, condition \eqref{suff-cond} is shown to be sufficient for the compact embedding of the space 
 \[
 \left\{f\in L^{2}(\Omega): \int_{\Omega}\int_{\Omega}\rho(\bdy-\bdx)\frac{|   {f(\bdy) - f(\bdx)}|^{2}}{|   {\bdy-\bdx}|^{2}} < \infty\right\}\; 
 \]
 in $L^{2}(\Omega)$.
  The statement is certainly true for any $1\leq p < \infty.$ The proof of \cite[Lemma 2.2]{Mengesha-Du-AIMS}  actually relies on and modifies the argument used to prove another type of compactness result by Bourgain, Brezis and Mironescu in \cite[Theorem 4]{anotherlook} that applies criteria involving a sequence of kernels. The argument of \cite{anotherlook} uses extensions of functions to $\mathbb{R}^{d}$ where the monotonicity of $\rho$ is used in an essential way to control the semi-norm of the extended functions by the original semi-norm. That is, let us introduce a sequence of radial functions  $\rho_{n}$ satisfying 
 \begin{equation}\label{cond-rhon}
\forall n \geq 1,\, \rho_{n}\geq 0, \,\,  \int_{\mathbb{R}^{d}} \rho_{n}({\bfs \xi}) d{\bfs \xi} = 1,\,\,\text{and\,\,}
  \lim_{n\to \infty} \int_{|{\bfs \xi}| > r} \rho_{n}({\bfs \xi}) d{\bfs \xi} = 0, \, \text{ $\forall r > 0$}.
 \end{equation}
 Assuming that for each $n$, $\rho_{n}$ is nonincreasing, and 
  if 
 \begin{equation}\label{**}
 \sup_{n\geq 1} \int_{\Omega}\int_{\Omega}\rho_{n}(\bdy-\bdx)\frac{|f_{n}(\bdy) - f_{n}(\bdx)|^{p}}{|\bdy-\bdx|^{p}} < \infty,  \end{equation}
 then $\{f_n\}$ is precompact in $L^{p}(\Omega)$, which is the result of
 \cite[Theorem 4]{anotherlook} obtained 
  by showing that \eqref{**} makes it possible to apply a variant of the Riesz-Fr\'echet-Kolomogorov theorem \cite{brezis-Functional}.  In \cite[Lemma 2.2]{Mengesha-Du-AIMS}, for a  fixed $\rho$, the condition \eqref{suff-cond} 
  is used to replace the role played by the condition \eqref{cond-rhon}. 
 In \cite[Theorem 1.2]{Ponce2003},  the same result as in \cite[Theorem 4]{anotherlook} was proved by dropping the monotonicity assumption on $\rho_{n}$ for $d\geq 2$.  
 In addition, the proof in \cite{Ponce2003}
 avoids the extension of functions to $\mathbb{R}^{d}$ but rather shows that the bulk of the mass of 
each ${f_{n}}$, that is $\int_{\Omega} |f_n|^{p}$,   comes from the interior and quantifies the contribution near the boundary. As a consequence, if \eqref{**} holds, then as $n\to \infty$ there is no mass concentration or leak at the boundary, two main causes of failure of compactness.  The compactness results were applied to establish some variational convergence results in \cite{Ponce2004}. 
 Clearly if one merely replaces scalar functions in \eqref{**} by vector fields,  both compactness results \cite[Theorem 4]{anotherlook} and  \cite[Theorem 1.2]{Ponce2003} will remain true.  It turns out the results will remain valid for vector fields even under a weaker assumption.  Indeed, following the argument \cite[Theorem 4]{anotherlook} and under the monotonicity assumption that for $n$, $\rho_{n}$ is nonincreasing, it was proved  in \cite[Theorem 5.1]{Mengesha} that
 if ${\bf u}_{n}$ is a bounded sequence of vector fields satisfying 
 \begin{equation}\label{cond-un}
 \sup_{n\geq 1} \int_{\Omega}\int_{\Omega} \rho_{n}(\bdy - \bdx)\left|\mathscr{D}({\bf u}_{n})(\bdx, \bdy)\right|^{p}d\bdy d\bdx < \infty
 \end{equation}
 then $\{{\bf u}_{n}\}$ precompact in the $L^{p}_{loc}(\Omega;\mathbb{R}^{d})$ topology with any limit point being in $W^{1, p}(\Omega;\mathbb{R}^{d})$ when $1< p < \infty$, and in $BD(\Omega)$ when $p=1$.  Here, $BD(\Omega)$ is the space of functions with bounded deformation.  Later, again under the monotonicity assumption on $\rho_{n}$, but using the argument of \cite[Theorem 1.2]{Ponce2003} instead, it was proved in \cite[Proposition 4.2]{Mengesha-Du-non} that in fact, \eqref{cond-un} implies that $\{{\bf u}_{n}\}$ is precompact in the $L^{p}(\Omega;\mathbb{R}^{d})$ topology.  In this paper, we will prove a similar result relaxing the requirement that $\rho_{n}$ is a Dirac-Delta sequence.  

\begin{theorem}\label{compactness-sequence}
   Let $\rho \in L^{1}_{loc}$ satisfy \eqref{rho-inc-rad} and  \eqref{suff-cond}.
For each $n$,  $\rho_{n}$ is radial and $\rho_{n}$ satisfies 
\eqref{rho-inc-rad} and that 
 \[
 \rho_{n}\geq 0, \quad \rho_{n}\rightharpoonup \rho, \quad\text{weakly in $L^{1}_{loc}(\mathbb{R}^{d})$}, \text{and\, $\rho_n \leq c \rho$}
 \]
 for some $c>0.$ Then, if  $\{{\bf u}_{n}\}$ is a bounded sequence in $L^{p}(\Omega; \mathbb{R}^{d})$ such that \eqref{cond-un} holds, then  $\{{\bf u}_{n}\}$ is precompact in $L^{p}(\Omega;\mathbb{R}^{d})$. Moreover, any limit point is in $\mathcal{S}_{\rho,p}(\Omega).$ 
 \end{theorem}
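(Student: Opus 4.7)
The plan is to adapt the Ponce--Mengesha--Du argument of \cite[Theorem 2.3]{Mengesha-Du-non} (which handles the Dirac--Delta case) to the weakly convergent setting, exploiting the coupling between the monotonicity \eqref{rho-inc-rad} of each $\rho_n$ and the singular growth \eqref{suff-cond} of the weak limit $\rho$. The structure is: (i) a uniform translation (equicontinuity) estimate; (ii) boundary-tightness to upgrade from $L^p_{loc}$ to $L^p(\Omega)$; and (iii) a lower-semicontinuity identification of the limit.

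The central estimate is
\[
\|\tau_h \mathbf{u}_n - \mathbf{u}_n\|_{L^p(\Omega_h)}^p \;\leq\; C\,\frac{|h|^p}{\Phi_n(|h|)}\,|\mathbf{u}_n|^p_{\mathcal{S}_{\rho_n,p}(\Omega)}, \qquad \Phi_n(r):=\int_{B_r}\rho_n(\xi)\,d\xi,
\]
with $C=C(d,p,\Omega)$. This is essentially the same estimate that underlies Theorem~\ref{main-compactness}: averaging $\mathbf{u}_n(x+h)-\mathbf{u}_n(x)$ through an intermediate point $z\in B_{|h|}(x)\cap B_{|h|}(x+h)$, weighted by $\rho_n(z-x)$, produces a bound on the translate norm in terms of the full (scalar) nonlocal seminorm, and the monotonicity of $|\xi|^{-p}\rho_n(\xi)$ permits the passage from the projected-difference-quotient seminorm $|\mathbf{u}_n|^p_{\mathcal{S}_{\rho_n,p}}$ to the full one via a nonlocal Korn-type inequality. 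The uniformity of $C$ in $n$ is immediate once the estimate is set up carefully.

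Given the estimate, uniform equicontinuity reduces to proving $\sup_n |h|^p/\Phi_n(|h|)\to 0$ as $h\to 0$. The weak convergence $\rho_n\rightharpoonup\rho$ in $L^1_{loc}(\mathbb{R}^d)$, tested against $\chi_{B_r}\in L^\infty_c$, gives $\Phi_n(r)\to\Phi(r):=\int_{B_r}\rho$ for every $r>0$; since $\Phi_n$ is nondecreasing in $r$ and $\Phi$ is continuous, a Dini-type theorem for monotone functions upgrades this to uniform convergence on compact subintervals of $(0,\infty)$. Combined with \eqref{suff-cond}, this produces, for each $\varepsilon>0$, thresholds $h_0>0$ and $N\in\mathbb{N}$ with $|h|^p/\Phi_n(|h|)<\varepsilon$ when $|h|\leq h_0$ and $n\geq N$; for the finitely many $n<N$, continuity of translation in $L^p$ of each individual $\mathbf{u}_n$ supplies the remaining equicontinuity. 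The delicate point---the main obstacle---is precisely this diagonal coupling between $|h|\to 0$ and $n\to\infty$, since for an individual $n$ the quantity $|h|^p/\Phi_n(|h|)$ need not vanish; a cleaner alternative may proceed via a mollification decomposition $\mathbf{u}_n=\mathbf{u}_n*\phi_\delta+(\mathbf{u}_n-\mathbf{u}_n*\phi_\delta)$, with Rellich--Kondrachov on the smooth part and the translation estimate integrated against $\phi_\delta$ on the remainder. Boundary tightness is then handled as in the Dirac--Delta case.

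For the limit characterization, if $\mathbf{u}_{n_k}\to\mathbf{u}$ in $L^p$, then along a further subsequence $\mathscr{D}(\mathbf{u}_{n_k})\to\mathscr{D}(\mathbf{u})$ a.e.\ on $\Omega\times\Omega$. Fatou combined with the weak convergence $\rho_{n_k}\rightharpoonup\rho$, applied via truncations $|\mathscr{D}(\mathbf{u})|^p\wedge K$ (bona fide $L^\infty$ test functions) followed by $K\to\infty$ via monotone convergence, yields $|\mathbf{u}|^p_{\mathcal{S}_{\rho,p}}\leq\liminf_k|\mathbf{u}_{n_k}|^p_{\mathcal{S}_{\rho_{n_k},p}}\leq M$, placing $\mathbf{u}\in\mathcal{S}_{\rho,p}(\Omega)$.
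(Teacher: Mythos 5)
Your proposed central estimate
\[
\|\tau_h \mathbf{u}_n - \mathbf{u}_n\|_{L^p(\Omega_h)}^p \leq C\,\frac{|h|^p}{\Phi_n(|h|)}\,|\mathbf{u}_n|^p_{\mathcal{S}_{\rho_n,p}(\Omega)}
\]
is false, and this is not a repairable technical slip but an obstruction intrinsic to the projected difference quotient. The seminorm $|\cdot|_{\mathcal{S}_{\rho_n,p}(\Omega)}$ vanishes identically on $\mathcal{R}$, the infinitesimal rigid motions: take $\mathbf{u}(\bdx)=A\bdx$ on a bounded $\Omega$ with $A$ skew-symmetric and nonzero, so that $\mathbf{u}\in L^p(\Omega;\mathbb{R}^d)$, $\mathscr{D}(\mathbf{u})\equiv 0$, hence the right-hand side is zero; yet $\tau_h\mathbf{u}-\mathbf{u}\equiv Ah\neq 0$ for generic $h$, so the left-hand side is strictly positive. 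There is no ``nonlocal Korn-type inequality'' that converts the projected seminorm into a translation bound, precisely because of this kernel. Your fallback decomposition $\mathbf{u}_n=\mathbf{u}_n*\phi_\delta+(\mathbf{u}_n-\mathbf{u}_n*\phi_\delta)$ with a scalar mollifier $\phi_\delta$ suffers the same defect: $\mathbf{u}_n-\mathbf{u}_n*\phi_\delta$ involves the full vector difference $\mathbf{u}_n(\bdy)-\mathbf{u}_n(\bdx)$ and cannot be bounded by $|\mathbf{u}_n|_{\mathcal{S}_{\rho_n,p}}$, for the identical reason.

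The paper sidesteps this entirely. Instead of translation equicontinuity, it uses a Riesz--Fr\'echet--Kolmogorov-type criterion in which the approximating operator is convolution with a \emph{matrix-valued} kernel $\mathbb{P}^\delta(\bdz)=\delta^{-d}\,d\,\mathbb{Q}^{-1}\tfrac{\bdz\otimes\bdz}{|\bdz|^2}\chi_{B_1^{\Lambda}}(\bdz/\delta)$, normalized so that $\int\mathbb{P}^\delta=\mathbb{I}$. The crucial point is that $\mathbb{P}^\delta(\bdy-\bdx)(\mathbf{u}(\bdy)-\mathbf{u}(\bdx))$ involves only the projection of $\mathbf{u}(\bdy)-\mathbf{u}(\bdx)$ onto $\bdy-\bdx$, so $\|\mathbf{u}_n-\mathbb{P}^\delta*\mathbf{u}_n\|_{L^p}$ genuinely is controlled by $|\mathbf{u}_n|_{\mathcal{S}_{\rho_n,p}}$, and the invertibility of $\mathbb{Q}=\bigl(\int_\Lambda s_i s_j\,d\mathcal{H}^{d-1}\bigr)$ (Lemma~\ref{positive-inf-on-sector}) recovers the full vector from directional averages. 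This is Corollary~\ref{compactness-sequence-loc}, which supplies the $L^p_{loc}$ compactness and limit identification; the upgrade to $L^p(\Omega)$ then comes from the boundary estimate of Lemma~\ref{boundary-control-apendix} applied to $\rho_{n_j}$, followed by letting $j\to\infty$ using $\rho_{n_j}\rightharpoonup\rho$ and then $r\to 0$ using \eqref{suff-cond}. Your parts (ii) and (iii) are in the right spirit, but (i) needs to be replaced wholesale by the matrix-convolution device.
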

  A natural by-product of Theorem \ref{compactness-sequence} is
 the Poincar\'e-Korn type inequality stated below.
  
 \begin{corollary}[Poincar\'e-Korn type inequality]\label{Poincare}
 Suppose that $1 \leq p < \infty$ and $V$ is a weakly closed subset of $L^{p}(\Omega; \mathbb{R}^{d})$ such that $V\cap\mathcal{R} = \{{\bfs 0}\}$. Let $\rho \in L^{1}_{loc}$ satisfies \eqref{rho-inc-rad} and  \eqref{suff-cond}.
Let  $\rho_{n}$ be a sequence of radial functions, and for each $n$, $\rho_{n}$ satisfies \eqref{rho-inc-rad} and that 
 \[
 \rho_{n}\geq 0, \quad \rho_{n}\rightharpoonup \rho, \quad\text{weakly in $L^{1}_{loc}(\mathbb{R}^{d})$}, \text{and\, $\rho_n \leq c \rho$}
 \] 
 for some $c>0$.  Then there exist constants $C > 0$ and $N\geq 1$ such that
 \begin{equation}\label{seq-poincare}
 \int_{\Omega} |{\bf u}|^{p}d\bdx \leq C \int_{\Omega}\int_{\Omega}  \rho_{n}(\bdy - \bdx)\left|\frac{({\bf u}(\bdy) - {\bf u}({\bdx}))}{|\bdy-\bdx|} \cdot \frac{(\bdy -\bdx)}{|\bdy - \bdx|}\right|^{p}d\bdy d\bdx
 \end{equation}
 for all ${\bf u}\in V\cap L^{p}(\Omega;\mathbb{R}^{d})$ and $n\geq N$.  The constant $C$ depends only on $V, d,p$,  $\rho$, and the Lipschitz character of $\Omega$. 
 \end{corollary}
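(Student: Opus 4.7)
The plan is a standard compactness–contradiction argument, in which Theorem~\ref{compactness-sequence} plays the role that a Rellich-type embedding plays in the classical proof of Poincar\'e--Korn. Assume the conclusion fails. Then for each integer $k \geq 1$ one may choose $n_k \geq k$ and ${\bf u}_k \in V \cap L^{p}(\Omega;\mathbb{R}^{d})$ with
\[
\int_\Omega |{\bf u}_k|^p\, d\bdx > k \int_\Omega \int_\Omega \rho_{n_k}(\bdy-\bdx)\, |\mathscr{D}({\bf u}_k)(\bdx,\bdy)|^p\, d\bdy\, d\bdx.
\]
Both sides being $p$-homogeneous in ${\bf u}$, I rescale (using that $V$ is closed under positive scalar multiplication, which holds in all standard applications, e.g.\ when $V$ is a closed subspace) to obtain ${\bf v}_k := {\bf u}_k/\|{\bf u}_k\|_{L^p}$ in $V$ with $\|{\bf v}_k\|_{L^p(\Omega)} = 1$ and $\rho_{n_k}$-seminorm bounded by $1/k$.

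The sequence $\{{\bf v}_k\}$ is then bounded in $L^p(\Omega;\mathbb{R}^d)$ and satisfies condition~\eqref{cond-un}. Applying Theorem~\ref{compactness-sequence} yields a subsequence (not relabeled) converging strongly in $L^{p}(\Omega;\mathbb{R}^d)$ to some ${\bf v} \in \mathcal{S}_{\rho, p}(\Omega)$ with $\|{\bf v}\|_{L^p} = 1$. Since strong $L^p$ convergence implies weak convergence and $V$ is weakly closed, ${\bf v} \in V$.

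What remains, and what I anticipate to be the main obstacle, is to show $|{\bf v}|_{\mathcal{S}_{\rho,p}} = 0$. This requires a lower semicontinuity of the form
\[
|{\bf v}|_{\mathcal{S}_{\rho,p}}^p \leq \liminf_{k\to\infty} \int_\Omega \int_\Omega \rho_{n_k}(\bdy-\bdx)\, |\mathscr{D}({\bf v}_k)(\bdx,\bdy)|^p\, d\bdy\, d\bdx = 0,
\]
in a setting where the fixed kernel $\rho$ is obtained from the $\rho_{n_k}$ only via weak $L^1_{loc}$ convergence. To establish it I would first extract a further subsequence with ${\bf v}_k \to {\bf v}$ almost everywhere, then truncate the integrand at level $M$ and localize to $\{|\bdy-\bdx| \geq \delta\}$; the truncated integrand is then bounded and almost everywhere convergent, so Mazur's lemma applied to $\{\rho_{n_k}\}$ produces convex combinations converging strongly in $L^1_{loc}$ to $\rho$, permitting passage to the limit against the truncated integrand. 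Sending $M \to \infty$ and $\delta \to 0$ by monotone convergence then yields the desired inequality. (The same mechanism is essentially what places the limit point in $\mathcal{S}_{\rho,p}(\Omega)$ inside the proof of Theorem~\ref{compactness-sequence}.) With $|{\bf v}|_{\mathcal{S}_{\rho,p}} = 0$ in hand, the characterization of the null set of the seminorm for positive radial $\rho$ forces ${\bf v} \in \mathcal{R}$, and hence ${\bf v} \in V \cap \mathcal{R} = \{{\bf 0}\}$, contradicting $\|{\bf v}\|_{L^p(\Omega)} = 1$.
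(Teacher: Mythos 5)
Your proposal is correct, and it is structurally the same compactness--contradiction argument as the paper's, but the two arguments close the contradiction in genuinely different ways. The paper first invokes the \emph{pre-existing} Poincar\'e--Korn inequality for the fixed kernel $\rho$ (from \cite{Du-Navier1}, \cite{Mengesha-Du}), takes $P_0$ to be its best constant, and aims at the sharper claim that \eqref{seq-poincare} holds with $C = P_0 + \epsilon$ for $n$ large; the contradiction is reached by showing (via lower semicontinuity of the seminorm, established exactly as in Corollary~\ref{compactness-sequence-loc} through mollification and $C^2$ local convergence) that the limit ${\bf u}$ would satisfy $|{\bf u}|^p_{\mathcal{S}_{\rho,p}} \leq 1/C < 1/P_0$, violating optimality of $P_0$. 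You instead bypass the best-constant machinery entirely: after compactness you drive the $\rho$-seminorm of the limit all the way to zero, invoke the characterization of the null space of $|\cdot|_{\mathcal{S}_{\rho,p}}$ (positive radial $\rho$ gives null space $\mathcal{R}$), and contradict $V\cap\mathcal{R}=\{{\bf 0}\}$ against $\|{\bf v}\|_{L^p}=1$. Your route is more self-contained (it re-derives rather than cites the Poincar\'e inequality) but yields no control on the size of $C$, whereas the paper's route gives the quantitative conclusion that the constant can be taken arbitrarily close to the optimal $P_0$. Your proposed lower-semicontinuity step via truncation and Mazur's lemma is a valid alternative to the paper's mollification argument. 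One shared implicit assumption: both you and the paper normalize ${\bf u}_k$ to unit $L^p$-norm while keeping it in $V$, which tacitly requires $V$ to be a cone (or at least closed under positive scaling); you flag this explicitly, the paper does not, but it is not a gap particular to your proof.
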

 We note that the Poincar\'e-Korn-type inequality has been proved for  Dirac-Delta sequence of kernels $\rho_{n}$ \cite{Mengesha-Du,Mengesha-Du-non}. The corollary extends the result to sequence of kernels that weakly converge to a given function $\rho$ satisfying \eqref{rho-inc-rad} and  \eqref{suff-cond}.

 The rest of the paper is devoted to prove the main results and it is organized as follows.  We prove Theorem \ref{thm-G-convergence-intro} in Section \ref{Gamma-conv-section}. 
 Theorem \ref{loc-compactness-S}  
     {and} Proposition  \ref{compactness-sequence-loc}     {are}  proved in section \ref{sec2}. The proof of Theorems  \ref{main-compactness} and
\ref{compactness-sequence}
and Corollary \ref{Poincare}
are presented in section \ref{sec3}.
Further discussions are given at the end of the paper.

 \section{Proof of the variational convergence}\label{Gamma-conv-section}
 In this section we will prove the $\Gamma$-convergence of the sequence of energies $E_{\rho_n}$ defined in \eqref{pot-p=2}. 
 The proof relies on a sequence of results on the limiting behavior of functions as well as the action of operators. To that end, we assume that $\rho$ and $\{\rho_n\}$ satisfy \eqref{rho-inc-rad1-p=2}, \eqref{suff-cond-p=2}, and \eqref{condition-on-seq-ker-p=2} throughout this section.  We begin with the convergence properties of the nonlocal divergence operator.

\begin{lemma}\label{weak-conv-nldiv-seq}
Suppose that ${\bf u}_n \to {\bf u}$ strongly in $L^{2}(\Omega;\mathbb{R}^{d})$, ${\bf u}\in \mathcal{S}_{\rho,2}(\Omega)$, and that $\{\mathfrak{D}_{\rho_n} ({\bf u}_n)\}$ is uniformly bounded in $L^{2}(\Omega)$. Then $\mathfrak{D}_{\rho_n}({\bf u}_n) \rightharpoonup \mathfrak{D}_{\rho}({\bf u})$ weakly in $L^{2}(\Omega)$.
\end{lemma}
\begin{proof}
We recall the nonlocal integration by parts formula (\cite{Mengesha-Spector, Mengesha-Du-non}) that for any $v\in W^{1,2}_0(\Omega)$
\[
\int_{\Omega} \mathfrak{D}_{\rho_n}({\bf u}_n) v(\bdx) dx = -\int_{\Omega} \mathcal{G}_{n}(v)(\bdx) \cdot {\bf u}_{n}(\bdx) d\bdx
 \]
 where $ \mathcal{G}_{n}(v)(\bdx) $ is the nonlocal gradient operator 
 \[
  \mathcal{G}_{\rho_n}(v)(\bdx)  = p.v. \int_{ \Omega} \rho_{n}(\bdy-\bdx)\frac{v(\bdy)+v(\bdx)}{|\bdy-\bdx|} \frac{\bdy-\bdx}{|\bdy-\bdx|}d\bdy.
 \]
 Now for $v\in C^{1}_{c}(\Omega)$, we may rewrite the nonlocal gradient as 
 \[
 \mathcal{G}_{\rho_n}(v)(\bdx)=\int_{ \Omega} \rho_{n}(\bdy-\bdx)\frac{v(\bdy)-v(\bdx)}{|\bdy-\bdx|} \frac{\bdy-\bdx}{|\bdy-\bdx|}d\bdy + 2  \int_{ \Omega} \rho_{n}(\bdy-\bdx)\frac{v(\bdx)}{|\bdy-\bdx|} \frac{\bdy-\bdx}{|\bdy-\bdx|}d\bdy.
 \]
  and estimate as \cite[Corollary 2.4]{Mengesha-Spector},
 \[\| \mathcal{G}_{\rho_n}(v)\|_{L^{\infty}} \leq 3 \|\rho_{n}\|_{L^{1}}\|\nabla v\|_{L^{\infty}} \leq  C \|\rho\|_{L^{1}}\|\nabla v\|_{L^{\infty}}.\]
 Also, it is not difficult to show that for all $\bdx\in \Omega$, $ \mathcal{G}_{\rho_n}(v)(\bdx) \to   \mathcal{G}_{\rho}(v)(\bdx)$. This follows from the  convergence of $\rho_n$ to $\rho$  in $L^{1}_{loc}(\mathbb{R}^d)$. We thus conclude that    {for $v\in C^{1}_{c}(\Omega)$,}
 \[
  \mathcal{G}_{\rho_n}(v) \to  \mathcal{G}_{\rho}(v)\quad \text{strongly in $L^{2}$.}
 \]
Thus from the above integration by parts formula we have that for any $v\in C^{1}_{c}(\Omega)$
\[
\begin{aligned}
\lim_{n\to \infty} \int_{\Omega} \mathfrak{D}_{\rho_n}({\bf u}_n) v(\bdx) dx &= -\lim_{n\to \infty}\int_{\Omega} \mathcal{G}_{\rho_n}(v)(\bdx) \cdot {\bf u}_{n}(\bdx) d\bdx \\
&=-\int_{\Omega} \mathcal{G}_\rho(v)(\bdx) \cdot {\bf u}(\bdx) d\bdx\\
&=\int_{\Omega} \mathfrak{D}_{\rho}({\bf u}) v(\bdx) d\bdx.
\end{aligned}
\]
The last inequality is possible because ${\bf u}\in \mathcal{S}_{\rho,2}(\Omega)$. 
Now for any $v\in L^{2}(\Omega)$, let us choose $v_m\in C^{1}_{c}(\Omega)$ such that $v_m\to v$ strongly in $L^{2}(\Omega)$. Then we have for each $n, m$ that 
\[
 \int_{\Omega} \mathfrak{D}_{\rho_n}({\bf u}_n) v(\bdx) d\bdx =  \int_{\Omega} \mathfrak{D}_{\rho_n}({\bf u}_n) v_m(\bdx) dx + R_{n,m}
\]
where 
\[
|R_{n,m}| = \left| \int_{\Omega} \mathfrak{D}_{\rho_n}({\bf u}_n) (v(\bdx)-v_{m}(\bdx)) d\bdx\right|\leq \|\mathfrak{D}_{\rho_n}({\bf u}_n)\|_{L^{2}(\Omega)}\|v_m - v\|_{L^{2}(\Omega)}    {.}
\]
Therefore using the fact that $\|\mathfrak{D}_{\rho_n}({\bf u}_n)\|_{L^{2}(\Omega)}$ is uniformly bounded in $n$, we have that $\lim_{m\to \infty} \sup_{n\in \mathbb{N}} |R_{n,m}|  = 0$ and so we have 
\[
\begin{aligned}
\liminf_{n\to \infty} \int_{\Omega} \mathfrak{D}_{\rho_n}({\bf u}_n) v(\bdx) dx &=\lim_{n\to \infty} \int_{\Omega} \mathfrak{D}_{\rho_n}({\bf u}_n) v_m(\bdx) dx + \liminf_{n\to \infty} R_{n,m}\\
 &= \int_{\Omega} \mathfrak{D}_{\rho}({\bf u}) v_m(\bdx) d\bdx +  \liminf_{n\to \infty} R_{n,m}.  
 \end{aligned}
\]
We now take $m\to \infty$ and use the fact that $\mathfrak{D}_{\rho}({\bf u})\in L^{2}(\Omega)$ to complete the proof the lemma. 
\end{proof}
\begin{lemma}\label{elastic-intermediate}
Suppose that ${\bf u}_n \to {\bf u}$ strongly in $L^{2}(\Omega;\mathbb{R}^d)$, ${\bf u}\in \mathcal{S}_{\rho, 2}(\Omega),$ and that $\sup_{n\in \mathbb{N}}W_{\rho_n} ({\bf u}_n) \leq C<\infty.$ Then it holds that 
 \begin{equation}\label{nlelastiic-part}
\begin{aligned} 
&\int_{\Omega}\int_{\Omega} \rho(\bdx'-\bdx) \left( \mcD({\bf u})(\bdx,\bdx') -\frac{1}{d}\,\mathfrak{D}_{\rho}({\bf u})(\bdx)\right)^2d\bdx'\,d\bdx\\
&\leq  \liminf_{n\to \infty} \int_{\Omega}\int_{\Omega} \rho_n(\bdx'-\bdx) \left( \mcD({\bf u}_n)(\bdx,\bdx') -\frac{1}{d}\,\mathfrak{D}_{\rho_n}({\bf u}_n)(\bdx)\right)^2d\bdx'\,d\bdx. 
\end{aligned}
\end{equation}

\end{lemma}
\begin{proof}
 Let $A\subset \subset \Omega$ and $\varphi\in C^{\infty}_c(B_1(0))$. For $\epsilon < \text{dist}(A, \partial \Omega)$, consider the sequence of functions $\varphi_\epsilon \ast {\bf u}_n$ and $\varphi_\epsilon \ast \mathfrak{D}_{\rho_n}({\bf u}_n)$ defined for $\bdx\in A$,
where 
 $\varphi_\epsilon({\bf z}) = \epsilon^{-d}\varphi({\bf z}/\epsilon)$ is standard mollifiers.  Then since ${\bf u}_n \to {\bf u}$ strongly in $L^{2}$, for a fixed $\epsilon>0, $ we have as $n\to \infty$, 
 \begin{equation}\label{c2-strong-l2}
 \begin{aligned}
 &\varphi_\epsilon \ast {\bf u}_n \to  \varphi_\epsilon \ast  {\bf u}\quad \text{in $C^{2}(\overline{A}; \mathbb{R}^d)$ and}\\
   &\varphi_\epsilon \ast \mathfrak{D}_{\rho_n}({\bf u}_n)\to \varphi_\epsilon \ast \mathfrak{D}_{\rho}({\bf u})\quad \text{strongly in $L^{2}(A)$.}
 \end{aligned}
 \end{equation}
 The    {latter} follows from Lemma \ref{weak-conv-nldiv-seq} and the fact that the convolution is a compact operator.  Using Jensen's inequality, we have that for each $\epsilon >0$ small and $n$ large 
  \begin{equation}\label{Jensen-1st}
 \begin{aligned}
& \int_{A}\int_{A} \rho_n(\bdx'-\bdx) \left( \mcD(\varphi_\epsilon \ast {\bf u}_n)(\bdx,\bdx') -\frac{1}{d}\,\varphi_\epsilon\ast\mathfrak{D}_{\rho_n}({\bf u}_n)(\bdx)\right)^2d\bdx'\,d\bdx\\
 &\leq  \int_{A}\int_{A} \rho_n(\bdx'-\bdx) \left( \mcD({\bf u}_n)(\bdx,\bdx') -\frac{1}{d}\,\mathfrak{D}_{\rho_n}({\bf u}_n)(\bdx)\right)^2d\bdx'\,d\bdx.
 \end{aligned}
 \end{equation}
 The left hand side of \eqref{Jensen-1st} can be rewritten after change of variables as 
 \[
  \begin{aligned}
& \int_{A}\int_{A} \rho_n(\bdx'-\bdx) \left( \mcD(\varphi_\epsilon \ast {\bf u}_n)(\bdx,\bdx') -\frac{1}{d}\,\varphi_\epsilon\ast\mathfrak{D}_{\rho_n}({\bf u}_n)(\bdx)\right)^2d\bdx'\,d\bdx\\
 &=  \int_{\mathbb{R}^{d}} \rho_n(\bdz) \int_{A}\chi_{A}(\bdx+\bdz)\left( \mcD(\varphi_\epsilon \ast{\bf u}_n)(\bdx,\bdx + \bdz) -\frac{1}{d}\,\varphi_\epsilon \ast\mathfrak{D}_{\rho_n}({\bf u}_n)(\bdx)\right)^2\,d\bdx d\bdz.
 \end{aligned}
 \]
Using \eqref{c2-strong-l2}, the sequence of functions
 $$
 \bdz\mapsto \int_{A}\chi_{A}(\bdx+\bdz)\left( \mcD(\varphi_\epsilon \ast{\bf u}_n)(\bdx,\bdx + \bdz) -\frac{1}{d}\,\varphi_\epsilon \ast\mathfrak{D}_{\rho_n}({\bf u}_n)(\bdx)\right)^2d\bdx\
 $$
 converges in $L^{\infty}(\Omega)$    {as $n\to\infty$} to $$\bdz\mapsto \int_{A}\chi_{A}(\bdx+\bdz)\left( \mcD(\varphi_\epsilon \ast{\bf u})(\bdx,\bdx + \bdz) -\frac{1}{d}\,\varphi_\epsilon \ast\mathfrak{D}_{\rho}({\bf u})(\bdx)\right)^2d\bdx$$ where we use the simple inequality $|a^2 - b^2|\leq ||a| + |b|||a-b|$ and the assumption that ${\bf u}\in \mathcal{S}_{\rho,2}(\Omega)$.  Using the convergence of $\rho_n$ to $\rho$ in $L^{1}_{loc}(\mathbb{R}^d)$ and taking the limit in \eqref{Jensen-1st} we conclude that for each $\epsilon > 0$,
 \[\begin{aligned}
& \int_{A}\int_{A} \rho(\bdx'-\bdx) \left( \mcD(\varphi_\epsilon \ast {\bf u})(\bdx,\bdx') -\frac{1}{d}\,\varphi_\epsilon\ast\mathfrak{D}_{\rho}({\bf u})(\bdx)\right)^2d\bdx'\,d\bdx\\
 &\leq \liminf_{n\to \infty} \int_{A}\int_{A} \rho_n(\bdx'-\bdx) \left( \mcD({\bf u}_n)(\bdx,\bdx') -\frac{1}{d}\,\mathfrak{D}_{\rho_n}({\bf u}_n)(\bdx)\right)^2d\bdx'\,d\bdx.
 \end{aligned}
 \]
 Now inequality \eqref{nlelastiic-part} follows after applying first Fatou's lemma in $\epsilon$ and noting that $A\subset\subset \Omega$ was arbitrary.
\end{proof}
Let us state  some elementary inequalities that relate the energy $W_{\rho}({\bf u})$ and its integrand with that of the seminorm $|{\bf u}|_{\mathcal{S}_{\rho, 2}}$. The proof follows from direct calculations and uses a simple application of H\"older's inequality.  
\begin{lemma}\label{W-integrand}For a given $\rho\in L^{1}_{loc}(\mathbb{R}^d)$ and $\Omega$ bounded such that for ${\bf u}\in \mathcal{S}_{\rho,2} (\Omega)$ and $\bdx \in \Omega$ we have 
\[
\begin{aligned}
\mathfrak{D}_{\rho}({\bf u})^{2}(\bdx) &\leq \|\rho\|_{L^{1}(B_{R}({\bfs 0}))}\int_{\Omega}\rho(\bdx-\bdy)|\mcD({\bf u})(\bdx, \bdy)|^2 d\bdy\\
\int_{\Omega} \rho(\bdy-\bdx) \left( \mcD({\bf u})(\bdx,\bdy) -\frac{1}{d}\,\mathfrak{D}_{\rho}({\bf u})(\bdx)\right)^2d\bdy& \leq C(d,\|\rho\|_{L^{1}(B_{R}({\bfs 0}))} )\,\int_{\Omega}\rho(\bdx-\bdy)|\mcD({\bf u})(\bdx, \bdy)|^2 d\bdy.
\end{aligned}
\]
Moreover, we have positive constants $C_1$ and $C_2$, depending on $\rho, \Omega,$ and $d$, such that for all ${\bf u}\in \mathcal{S}_{\rho,2} (\Omega)$ 
\[
C_{1} |{\bf u}|_{\mathcal{S}_{\rho,2} (\Omega)}^{2} \leq W_{\rho}({\bf u}) \leq C_2 |{\bf u}|_{\mathcal{S}_{\rho,2} (\Omega)}^{2}.
\]
\end{lemma}

 \begin{proof}[Proof Theorem \ref{thm-G-convergence-intro}]
 The proof has two parts: the demonstration of the $\Gamma$-convergence of the energy functionals and the proof of the convergence of minimizers. For the first part, following the definition of $\Gamma$-convergence, we prove the two items in Definition \ref{defn-gamma}.
 
 {\bf Item a)}. Suppose that $   {{\bf u}_n} \to {\bf u}$ strongly in $L^{2}$.  We will show that  \[\overline{E}_{\rho}({\bf u})\leq \liminf_{n\to \infty}\overline{E}_{n}({\bf u}_{n})    {.}\]
 Since $\int_{\Omega}{\bf f}\cdot {\bf u}_nd\bdx \to \int_{\Omega}{\bf f}\cdot {\bf u}d\bdx$ as $n\to \infty,$ we only need to show that 
 \[
 {W}_{\rho}({\bf u})\leq \liminf_{n\to \infty}{W}_{\rho_n}({\bf u}_{n})    {.}
 \]
 To that end, we will assume without loss of generality that $ \liminf_{n\to \infty}{W}_{\rho_{n}}({\bf u}_{n})<\infty$, and so (up to a subsequence) $\sup_{n\in \mathbb{N}}W_{\rho_n} ({\bf u}_n) \leq C<\infty.$ Then we have that $\{\mathfrak{D}_{\rho_n}({\bf u}_n)\}$ is uniformly bounded in $L^{2}(\Omega)$ and $\{|{\bf u}_{n}|_{\mathcal{S}_{\rho_n, 2}}\}$ is uniformly bounded as well, by Lemma \ref{W-integrand}. To prove the desired inequality it suffices to show that 
 \begin{equation}\label{nldiv-seq}
 \int_{\Omega} (\mathfrak{D}_{\rho}({\bf u}))^{2} dx\leq \liminf_{n\to \infty}   \int_{\Omega} (\mathfrak{D}_{\rho_n}({\bf u}_n))^{2} dx
\end{equation}
and 
\begin{equation}\label{nlelastiic-part-n}
\begin{aligned} 
&\int_{\Omega}\int_{\Omega} \rho(\bdx'-\bdx) \left( \mcD({\bf u})(\bdx,\bdx') -\frac{1}{d}\,\mathfrak{D}_{\rho}({\bf u})(\bdx)\right)^2d\bdx'\,d\bdx\\
&\leq 
\liminf_{n\to \infty} \int_{\Omega}\int_{\Omega} \rho_n(\bdx'-\bdx) \left( \mcD({\bf u}_n)(\bdx,\bdx') -\frac{1}{d}\,\mathfrak{D}_{\rho_n}({\bf u}_n)(\bdx)\right)^2d\bdx'\,d\bdx.
\end{aligned}
\end{equation}
To show \eqref{nldiv-seq}, using the weak lower semicontinuity of norm, 
 it suffices to show that $\mathfrak{D}_{\rho_n}({\bf u}_n) \rightharpoonup \mathfrak{D}_{\rho}({\bf u})$ weakly in $L^{2}(\Omega)$. But this is proved in Lemma \ref{weak-conv-nldiv-seq} after noting    {the} above assumption.  
  
  Inequality \eqref{nlelastiic-part-n} will follow from Lemma \ref{elastic-intermediate} if we show ${\bf u} \in \mathcal{S}_{\rho,2}(\Omega)$.  But under the assumptions on the sequence ${\bf u}_n$, the conclusion ${\bf u} \in \mathcal{S}_{\rho,2}(\Omega)$ follows from Theorem \ref{compactness-sequence} that will be proved in the coming sections    {.}

{\bf Item b)}. For a given ${\bf u}\in L^{2}(\Omega)$, we take the recovery sequence to be ${\bf u}_{n} = {\bf u}$. Now if ${\bf u}\in L^{2}(\Omega)\setminus \mathcal{S}_{\rho,2}(\Omega)$, then by definition $\bar{E}_{\infty}({\bf u}) = \infty$ and necessarily $ \liminf_{n\to \infty} E_{\rho_{n}}({\bf u}) =\infty$. Otherwise, up to a subsequence (nor renamed) $\sup_{n} E_{\rho_n}({\bf u})  < \infty$ and 
 \[
 [{\bf u}]_{\mathcal{S}_{\rho_n, 2}}^2 \leq C (E_{\rho_{n}}({\bf u}) + \|{\bf u}\|_{L^{2}}) \leq C +  \|{\bf u}\|_{L^{2}},
 \]
 where we used Lemma \ref{W-integrand}. 
    {Then by passing to the limit and} using Fatou's lemma, we have $ [{\bf u}]_{\mathcal{S}_{\rho, 2}}^2 < \infty$, that is, ${\bf u}\in \mathcal{S}_{\rho,2}(\Omega)$, which is a contradiction. 
In the event     {that} ${\bf u}\in \mathcal{S}_{\rho, 2}(\Omega)$, we may use \eqref{condition-on-seq-ker-p=2} to get the pointwise convergence and Lemma \ref{W-integrand} to get appropriate bounds of the integrand of $W_{\rho_n}({\bf u})$ to apply the Dominated Convergence Theorem and conclude that  $ \liminf_{n\to \infty} W_{\rho_{n}}({\bf u}) =W_{\rho}({\bf u})$, from which Item b) follows. 

We next prove the second part of the theorem, the convergence of minimizers. To apply \cite[Theorem 7.8 and Corollary 7.202]{DalMaso}, we need to prove     {the} equicoercvity of the functionals restricted to $V\cap \mathcal{S}_{\rho_n, 2}$. That is, for ${\bf u}_n \in V\cap \mathcal{S}_{\rho_n, 2}$ such that $\sup_{n \geq 1} E_{\rho_n}({\bf u}_n) < \infty$, we need to show that the sequence $\{{\bf u}_{n}\}$ is precompact in $L^{2}(\Omega;\mathbb{R}^d)$. To that end, first a positive constant $C>0$ and for all $n\geq 1$
\[
 [{\bf u}_n]_{\mathcal{S}_{\rho_n, 2}}^2 \leq C (E_{\rho_{n}}({\bf u}_n) + \|{\bf u}_n\|_{L^{2}}) \leq C +  \|{\bf u}_n\|_{L^{2}}
\]
Using the uniform Poincar\'e-Korn inequality, Theorem \ref{Poincare}, for all large $n$ we have that $ \|{\bf u}_n\|_{L^{2}} \leq C  [{\bf u}_n]_{\mathcal{S}_{\rho_n, 2}}$ and as a consequence 
\[
 [{\bf u}_n]_{\mathcal{S}_{\rho_n, 2}}^2 \leq C(1 +  [{\bf u}_n]_{\mathcal{S}_{\rho_n, 2}})\quad \quad\text{for all $n$ large}.
\]
It then follows that $ [{\bf u}_n]_{\mathcal{S}_{\rho_n, 2}}$ is uniformly bounded and therefore, by the uniform Poincar\'e-Korn inequality, $\|{\bf u}_n\|_{L^{2}(\Omega)}$ is bounded as well.  We now use the compactness result, Theorem    {\ref{compactness-sequence}}, to conclude that $\{{\bf u}_n\}$ is precompact in $L^{2}(\Omega;\mathbb{R}^2)$ with limit point ${\bf u}$ in $\mathcal{S}_{\rho, 2}(\Omega) \cap V.$   We may now apply \cite[Theorem 7.8 and Corollary 7.202]{DalMaso} to state that ${\bf u}$ is a minimizer of $E_{\rho}$ over $\mathcal{S}_{\rho, 2}(\Omega) \cap V.$ 
 \end{proof}

\section{Compactness in $L^{p}_{loc}(\mathbb{R}^{d})$}
\label{sec2}
The proof of the $L^{p}$ compactness result, Theorem \ref{main-compactness}, will be carried out in two steps. We establish first compactness in $L^{p}_{loc}$ topology followed by proving     {a} boundary estimate that controls growth near the boundary of the domain. 
The $L^{p}_{loc}$ compactness will be proved in this section under a weaker assumption on the kernel. 
In fact $L^{p}_{loc}$ compactness will be stated and proved for a broader class of kernels that include kernels of the type $\tilde{\rho} (   {\bfs\xi})\chi_{B_{1}^{\Lambda}}(   {\bfs\xi})$ where $\tilde{\rho}$ satisfies  \eqref{rho-inc-rad} and \eqref{suff-cond}, where $B_{1}^{\Lambda} = \{\bdx\in B_{1}: {\bdx/|{\bdx}|}\in \Lambda\}$ is a conic region spanned by 
 a given a nontrivial spherical cap $\Lambda \subset\mathbb{S}^{d-1}$.
 To make this and the condition of the theorem  precise, we begin identifying the kernel $\rho$ by the representative 
\[
{\rho}(\bdx) = \left\{\begin{aligned}
&\lim_{h\to 0}\fint_{B_{h}(\bdx)} \rho({\bfs \xi})d{{\bfs \xi}},\quad \text{if $\bdx$ is a Lebesgue point},\\
& \infty,\quad \text{otherwise}.
\end{aligned}\right.
\]
For $\theta_{0}\in (0,1)$ and ${\bf v}\in \mathbb{S}^{d-1}$, let us define 
\[
\rho_{\theta_{0}}(r{\bf v}) = \inf_{\theta\in [\theta_{0}, 1]}\rho(\theta r{\bf v})\theta^{-p}. 
\]
It is clear that for a given ${\bf v}\in \mathbb{S}^{d-1}$, $\rho_{\theta_{0}}(r{\bf v}) \leq \rho(\theta r{\bf v})\theta^{-p}$ for any $\theta\in [\theta_{0}, 1]$ and $r\in (0, \infty)$.  In particular, this implies $\rho_{\theta_{0}}({\bfs \xi}
) \leq \rho({\bfs \xi})$ for any ${\bfs \xi}$, with the equality holds if $\rho$ is radial  and $|{\bfs \xi}|^{-p}\rho({\bfs \xi})$ is {\em nonincreasing} in $|{\bfs \xi}|$.
 
We now make a main assumption on $\rho$ that  
\begin{equation}\label{cone-suff-condition}
{\small \begin{aligned}
\quad\quad &\text{$\exists\, \theta_{0}\in (0, 1)$,  $\Lambda \subset \mathbb{S}^{d-1}$ and ${\bf v}_0\in \Lambda$
such that 
$\mathcal{H}^{d-1}(\Lambda) > 0$,}\\ 
& \text{$\rho_{\theta_{0}}(r{\bf v})  =\rho_{\theta_{0}}(r{\bf v}_0)$, $\forall\, (r, {\bf v}) \in (0, \infty)\times\Lambda$,\,  and  }\;
\lim_{\delta \to 0}\frac{\delta^{p}}{\displaystyle \int_{0}^{\delta} \rho_{\theta_{0}}(r{\bf v}_0) r^{d-1} dr}  = 0.
\end{aligned}}
\end{equation}

   {Assumption} \eqref{cone-suff-condition} says that, on a conic region with apex at the origin, the kernel $\rho$ is above a nonnegative function 
with appropriate singular growth near the origin.  Note that on one hand, it is not difficult to see if $\rho \in L^{1}_{loc}(\mathbb{R}^{d})$ is a  nonnegative function that satisfies \eqref{rho-inc-rad} and \eqref{suff-cond}, then it also satisfies \eqref{cone-suff-condition}. On the other hand, if  $\tilde{\rho}$ satisfies  \eqref{rho-inc-rad} and \eqref{suff-cond}, then given a nontrivial spherical cap $\Lambda$ and  conic region $B_{1}^{\Lambda} = \{\bdx\in B_{1}: {\bdx/|{\bdx}|}\in \Lambda\}$, the kernel $\rho({\bfs \xi}) = \tilde{\rho} ({\bfs \xi})\chi_{B_{1}^{\Lambda}}({\bfs \xi})$  satisfies \eqref{cone-suff-condition} (with $\theta_0$ being any number in $(0, 1)$ and ${\bf v}_0$ representing any vector in $\Lambda$) but not necessarily \eqref{rho-inc-rad} and \eqref{suff-cond}.  For kernels of this form, we need the formulation in \eqref{cone-suff-condition} to carry out the proof of  $L^{p}_{loc}$ compactness. We should also note that one can construct other $\rho$ that are not of the above form that satisfy \eqref{cone-suff-condition}, see \cite[equation (17)]{anotherlook}.

 \begin{theorem}[$L^{p}_{loc}$ compactness]\label{loc-compactness-main}Suppose that $1\leq p  < \infty$. 
Let $\rho \in L^{1}(\mathbb{R}^{d})$ be a nonnegative function satisfying \eqref{cone-suff-condition}. Suppose also that $\{{\bf u}_{n}\}$ is a sequence of vector fields that is bounded in $\mathcal{S}_{\rho,p}(\mathbb{R}^{d})$. 
Then for any $D\subset \mathbb{R}^{d}$ open and bounded,  the sequence $\{{\bf u}_{n}|_{D}\}$ is precompact in $L^{p}(D; \mathbb{R}^{d})$. 
\end{theorem}
We should mention that although the focus is different, operators that use non-symmetric kernels like those satisfying the condition \eqref{cone-suff-condition} have been studied in connection with semi-Dirichlet forms and the processes they generate, see \cite{Kassmann,Barles} for more discussions.  In particular, most of the examples of kernels listed in \cite[Section 6]{Kassmann} satisfy condition \eqref{cone-suff-condition}.

\subsection{A few technical lemmas}
We begin with the following lemma whose proof can be carried out following the argument used in \cite{Ponce2003}. 
Let ${\bf u}\in L^{p}(\mathbb{R}^{d};\mathbb{R}^{d})$ be given, we introduce the function  $F_{p}[{\bf u}]:\mathbb{R}^{d}\to [0, \infty)$ defined by 
\[
F_{p}[{\bf u}]({\bf h}) = \int_{\mathbb{R}^{d}}\left|({\bf u}(\bdx + {\bf h}) - {\bf u}(\bdx))\cdot \frac{{\bf h}}{|{\bf h}|}\right|^{p}d{\bdx},\quad \text{for ${\bf h}\in \mathbb{R}^{d}$}. 
\]
\begin{lemma}\label{est-for-F} Suppose that $\theta_{0}$ is given as  in \eqref{cone-suff-condition}. 
There exists a constant $C = C(\theta_{0},p)>0$ such that for any $\delta > 0$, and  ${\bf v}\in \mathbb{S}^{d-1}$ 
\[
\begin{aligned}
F_{p}[{\bf u}](t{\bf v}) 
&\leq C \frac{\delta^{p}}{\displaystyle\int_{0}^{\delta} \rho_{\theta_{0}} (s{\bf v})s^{d-1} ds}  \int_{0}^{\infty}\rho(h{\bf v}) h^{d-1}\frac{F_{p}[{\bf u}](h{\bf v})}{{h^{p}}}dh,
\end{aligned}
\]
for any $0 < t < \delta$ and any  ${\bf u}\in L^{p}(\mathbb{R}^{d}, \mathbb{R}^{d})$. 
\end{lemma}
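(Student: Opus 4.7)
The plan is to base the proof on the subadditivity of the directional functional $\Phi(\tau) := F_p[{\bf u}](\tau{\bf v})^{1/p}$ viewed as a function of $\tau > 0$. First, from the telescoping identity ${\bf u}(\bdx+(\tau_1+\tau_2){\bf v})-{\bf u}(\bdx) = [{\bf u}(\bdx+\tau_1{\bf v})-{\bf u}(\bdx)]+[{\bf u}(\bdx+(\tau_1+\tau_2){\bf v})-{\bf u}(\bdx+\tau_1{\bf v})]$ projected onto ${\bf v}$, together with Minkowski's inequality in $L^p(\mathbb{R}^d)$ and translation invariance of Lebesgue measure on $\mathbb{R}^d$, I would show that $\Phi(\tau_1+\tau_2)\le\Phi(\tau_1)+\Phi(\tau_2)$. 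Equivalently, for any $h > t > 0$,
\[
F_p[{\bf u}](t{\bf v})\le 2^{p-1}\bigl[F_p[{\bf u}](h{\bf v})+F_p[{\bf u}]((h-t){\bf v})\bigr].
\]

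Next, taking $h=t+s$, I would multiply this inequality by the weight $\rho_{\theta_0}(s{\bf v})\,s^{d-1}$ and integrate over $s\in(0,\delta)$. The left side becomes $F_p[{\bf u}](t{\bf v})\cdot\int_0^\delta\rho_{\theta_0}(s{\bf v})\,s^{d-1}\,ds$, which is exactly the denominator appearing in the target bound. The right side splits as $2^{p-1}(I_1+I_2)$, where
\[
I_2 = \int_0^\delta\rho_{\theta_0}(s{\bf v})\,s^{d-1}F_p(s{\bf v})\,ds,\qquad I_1 = \int_0^\delta\rho_{\theta_0}(s{\bf v})\,s^{d-1}F_p((t+s){\bf v})\,ds.
\]
The term $I_2$ is handled routinely: using $s^p\le\delta^p$ on $(0,\delta)$ and $\rho_{\theta_0}\le\rho$,
\[
I_2\le\delta^p\int_0^\infty\rho(h{\bf v})\,h^{d-1}\,F_p(h{\bf v})/h^p\,dh,
\]
which is already of the desired form.

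The genuine obstacle is $I_1$. After the substitution $h=t+s$, it reads $\int_t^{t+\delta}\rho_{\theta_0}((h-t){\bf v})(h-t)^{d-1}F_p(h{\bf v})\,dh$, where $\rho_{\theta_0}$ is evaluated at the shifted argument $h-t$ rather than at $h$; no pointwise comparison with $\rho(h{\bf v})$ is available, and naïve iteration of the subadditivity cannot absorb the recurring $F_p(t{\bf v})$ since the Minkowski constant $2^{p-1}$ is too large when $p\ge 1$. Following the argument of \cite{Ponce2003}, I would instead exploit the reformulation $\rho_{\theta_0}(r{\bf v}) = r^p\inf_{\sigma\in[\theta_0 r,\,r]}\rho(\sigma{\bf v})/\sigma^p$, replace the infimum by its average over $[\theta_0(h-t),h-t]$ at the cost of the harmless factor $(1-\theta_0)^{-1}$, and then invoke Fubini together with a rescaling of the integration variable to rewrite the resulting double integral as a tame multiple of $\delta^p\int_0^\infty\rho(h{\bf v})\,h^{d-1-p}F_p(h{\bf v})\,dh$, with a constant depending only on $\theta_0$ and $p$. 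Dividing through by $\int_0^\delta\rho_{\theta_0}(s{\bf v})s^{d-1}ds$ then yields the claimed inequality.

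The hardest step will be the treatment of $I_1$: here the structural content of the definition of $\rho_{\theta_0}$, together with the cone-type scaling information contained in \eqref{cone-suff-condition}, is exactly what converts the inconvenient shift $h-t$ inside the kernel into a legitimate integral in the variable $h$ against $\rho(h{\bf v})$, producing the $\delta^p$ gain required on the right-hand side.
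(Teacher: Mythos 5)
Your decomposition into $I_1$ and $I_2$ is sound, and the estimate for $I_2$ is correct. The gap is in the treatment of $I_1$, which is exactly where the paper departs from the straightforward subadditivity. After replacing the infimum by the average and applying Fubini, you arrive at an integral of the form $\int \rho(\sigma{\bf v})\,\sigma^{-p}\,\bigl(\int F_p[{\bf u}](h{\bf v})(h-t)^{d+p-2}\,dh\bigr)\,d\sigma$, where the inner $h$-range is $[t+\sigma,\; t+\min(\sigma/\theta_0,\delta)]$. The arguments of $\rho$ (namely $\sigma{\bf v}$) and of $F_p$ (namely $h{\bf v}$, with $h\ge t+\sigma>\sigma$) remain at genuinely different points, and no single rescaling of the integration variable can make them coincide: after Fubini $h$ and $\sigma$ are independent over a two-dimensional region, and $F_p[{\bf u}](\cdot\,{\bf v})$ carries no monotonicity that would let you transport it from $h$ to $\sigma$. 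Your own diagnosis of the mismatch is accurate, but the inf-to-average replacement relocates only the $\rho$ argument, not the $F_p$ argument, and therefore does not resolve it.

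The paper sets up the alignment \emph{before} integrating. It starts not from the two-term subadditivity but from the three-term inequality of \cite[Lemma 3.1]{Ponce2003}: given $0<s<\tau$ there are an integer $k$ and $\theta=\tau/s-k\in(0,1)$ with $F_p[{\bf u}](\tau{\bf v})/\tau^p\le C_p\{F_p[{\bf u}](s{\bf v})/s^p+F_p[{\bf u}](\theta s{\bf v})/\tau^p\}$, and a further subdivision step replaces the remainder argument $\theta s$ by $\tilde\theta s$ with $\tilde\theta=1-\theta/l_0$. Choosing $l_0$ so that $1/l_0<1-\theta_0$ forces $\tilde\theta\in(\theta_0,1]$, exactly the range of scales entering the infimum defining $\rho_{\theta_0}$, whence $\rho_{\theta_0}(s{\bf v})\le\rho(\tilde\theta s{\bf v})\,\tilde\theta^{-p}$. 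Thus, after multiplying by $\rho_{\theta_0}(s{\bf v})s^{d-1}$ and integrating, both $\rho$ and $F_p$ sit at the same point $\tilde\theta s{\bf v}$, and the piecewise-affine change of variable $h=\tilde\theta s$ (affine on each interval $(\tau/(k+1),\tau/k]$) yields $\int_0^\infty\rho(h{\bf v})h^{d-1-p}F_p[{\bf u}](h{\bf v})\,dh$ with bounded overlap. Your remainder $F_p[{\bf u}]((t+s){\bf v})$ has $t+s>s$, i.e., it lies \emph{outside} the window $[\theta_0 s, s]$ that $\rho_{\theta_0}(s{\bf v})$ controls, which is why no analogous alignment is available along your route; you would need to restart from Ponce's lemma rather than from plain telescoping.
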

\begin{proof}
For any ${\bf v}\in \mathbb{S}^{d-1}$ and $t\in \mathbb{R}$, we may rewrite the function $F_{p}$ as 
\[
F_{p}[{\bf u}](t{\bf v})= \int_{\mathbb{R}^{d}}|({\bf u}(\bdx + t{\bf v}) - {\bf u}(\bdx))\cdot {\bf v}|^{p}d{\bdx}. 
\]
It follows from \cite[Lemma 3.1]{Ponce2003} that given $0 <s<t$, there exist $C_{p}$ and $\theta = \frac{t}{s} - k\in (0,1)$ ($k$ an integer) such that 
\[
\frac{F_{p}[{\bf u}](t{\bf v})}{ t^{p}} \leq C_{p}\left\{ {F_{p}[{\bf u}](s{\bf v})\over s^{p}}  + {F_{p}[{\bf u}](\theta s{\bf v})\over t^{p}}\right\}.
\]
 We also have that for a given $l_{0} \in \mathbb{N}$, 
 \[
 F_{p}[{\bf u}](\theta s{\bf v}) \leq l_{0}^{p} F_{p}[{\bf u}]\left(\frac{\theta s}{ l_{0}} {\bf v}\right) \leq {2^{(p-1)}}l^{p}_{0}\left\{ F_{p}[{\bf u}](s{\bf v})  + F_{p}[{\bf u}]\left(s - \frac{s\theta}{{l_{0}}}{\bf v}\right)\right\}. 
 \]
 Combining the above we have that for any $l_{0}$, there exists a constant $C = C(p, l_{0})$ such that 
 \begin{equation}\label{lp-loc-eqn1}
\frac{F_{p}[{\bf u}](t{\bf v})}{ t^{p}} \leq C(p, l_{0})\left\{ {F_{p}[{\bf u}](s{\bf v})\over s^{p}}  + {F_{p}[{\bf u}](\tilde{\theta} s{\bf v})\over t^{p}}\right\},\quad \text{where $\tilde{\theta} = 1- {\theta\over l_{0}}$. }
\end{equation}
Now let us take $\theta_{0}$ as given in \eqref{cone-suff-condition} and choose $l_{0}$ large that ${1\over l_{0}} < 1-\theta_{0}$.  It  follows that $\theta_{0} < \tilde{\theta} \leq 1$. 
Then for any $\delta > 0$, and any $0 < s  < \delta \leq \tau $, by multiplying both sides of inequality \eqref{lp-loc-eqn1} by $\rho_{\theta_{0}} ({\bf v}s)$ and integrating from $0$ to $\delta$, we obtain 
\[
\begin{aligned}
&
\int_{0}^{\delta} \rho_{\theta_{0}}(s{\bf v}) s^{d-1}ds  {F_{p}[{\bf u}](\tau{\bf v})\over \tau^{p}} \leq C(p, l_{0})\left\{ \int_{0}^{\delta} \rho_{\theta_{0}}(s{\bf v}) s^{d-1}{F_{p}[{\bf u}](s{\bf v})\over s^{p}} ds  \right.\\
&\qquad\quad \left. +  \int_{0}^{\delta} \rho_{\theta_{0}}(s{\bf v}) s^{d-1}{F_{p}[{\bf u}](\tilde{\theta} s{\bf v})\over \tau^{p}}ds \right\}.
\end{aligned}
\]
Let us estimate the second integral in the above: 
\[
I =  {1\over \tau^{p}}\int_{0}^{\delta} \rho_{\theta_{0}}(s{\bf v}) s^{d-1}{F_{p}[{\bf u}](\tilde{\theta} s{\bf v})}ds. 
\]
We first note that using the definition of $\rho_{\theta_{0}}$ and since $\delta \leq \tau$, we have 
\[
I\leq  {1 \over {\theta_{0}^{d-1}}} \int_{0}^{\tau} \rho(\tilde{\theta}s{\bf v}) (\tilde{\theta} s)^{d-1}{F_{p}[{\bf u}](\tilde{\theta} s{\bf v})\over {(\tilde{\theta} s)^{p}}}ds. 
\]
The intention is to change variables $h=\tilde{\theta} s$. However, note that $\tilde{\theta}$ is a function of $s$, and by definition 
\[
\tilde{\theta}s = \left(\frac{k}{ l_{0}} + 1\right)s - \frac{\tau}{ l_{0}}\quad \text{for $k \leq \frac{\tau}{s}  < k + 1$}.
\]
It then follows by a change of variables that
\[
\begin{aligned}
I &\leq  \frac{1} {{\theta_{0}^{d-1}}} \sum_{k=1}^{\infty}\int_{\frac{\tau}{ (k+1)}}^{\frac{\tau}{ k}}  \rho(\tilde{\theta}s{\bf v}) (\tilde{\theta}s)^{d-1}\frac{F_{p}[{\bf u}](\tilde{\theta} s{\bf v})}{ {(\tilde{\theta} s)^{p}}}ds\\
& = \frac{1}{{\theta_{0}^{d-1}}} \sum_{k=1}^{\infty}\int_{{\tau(1 - \frac{1}{ l_{0}})\over (k+1)}}^{\frac{\tau}{ k}}  \rho(h{\bf v}) h^{d-1}{F_{p}[{\bf u}](h{\bf v})\over {h^{p}}}{dh\over {k\over l_{0}} + 1}\\
&\leq C \int_{0}^{\infty}\rho(h{\bf v}) h^{d-1}\frac{F_{p}[{\bf u}](h{\bf v})}{h^{p}}dh,
\end{aligned}
\]
where in the last estimate integrals over overlapping domains were counted at most a finite number of times. 
Combining the above estimates we have shown that there exists a constant $C$ such that for any ${\bf v} \in \mathbb{S}^{d-1}$,  $\delta>0$ and $\tau\geq \delta$
\[
\left(\int_{0}^{\delta} \rho_{\theta_{0}} (s{\bf v})s^{d-1} ds\right) \frac{F_{p}[{\bf u}](\tau{\bf v})}{ \tau^{p}} \leq C \int_{0}^{\infty}\rho(h{\bf v}) h^{d-1}\frac{F_{p}[{\bf u}](h{\bf v})}{ {h^{p}}}dh. 
\]
Rewriting the above and restricting ${\bf v} \in \Lambda$ we have that 
\[
F_{p}[{\bf u}](\tau{\bf v}) \leq C \frac{\tau^{p}}{ \displaystyle \int_{0}^{\delta} \rho_{\theta_{0}} (s{\bf v})s^{d-1} ds}  \int_{0}^{\infty}\rho(h{\bf v}) h^{d-1}\frac{F_{p}[{\bf u}](h{\bf v})}{h^{p}}dh. 
\]
Now let $0 < t < \delta$ and applying the above inequality for $\tau = \delta$ and $\tau = t + \delta$, we obtain
\[
\begin{aligned}
F_{p}[{\bf u}](t{\bf v}) &=  F_{p}[{\bf u}]((t + \delta){\bf v} -\delta{\bf v})\\
&\leq 2^{p-1} \left\{F_{p}[{\bf u}]((t + \delta){\bf v}) + F_{p}[{\bf u}](\delta{\bf v})\right\}\\
&\leq C \frac{\delta^{p}}{ \displaystyle   \int_{0}^{\delta} \rho_{\theta_{0}} (s{\bf v})s^{d-1} ds}  \int_{0}^{\infty}\rho(h{\bf v}) h^{d-1}\frac{F_{p}[{\bf u}](h{\bf v})}{ {h^{p}}}dh. 
\end{aligned}
\]
This completes the proof. 
\end{proof}

\begin{lemma}
    \label{est-for-F-cor}
Suppose that $\rho\in L^{1}_{loc}(\mathbb{R}^{d})$ and there exists a spherical cap $\Lambda\subset \mathbb{S}^{d-1}$ and a vector ${\bf v}_{0}\in\Lambda$
 such that the function $\rho(r{\bf v}) =\rho(r{\bf v}_{0}) =
  \tilde{\rho}(r)$, for all ${\bf v}\in \Lambda$ and  $r\mapsto r^{-p}\tilde{\rho}(r)$ is nonincreasing.  Then there exists a constant $C = C(d,p,\Lambda)$ such that for any $\delta > 0$, and  ${\bf v}\in \Lambda$, 
\[
\begin{aligned}
F_{p}[{\bf u}](t{\bf v}) 
&\leq C \frac{\delta^{p}}{\displaystyle  \int_{0}^{\delta} \tilde{\rho} (s)s^{d-1} ds}  \int_{0}^{\infty}{\rho}(h{\bf v}) h^{d-1}\frac{F_{p}[{\bf u}](h{\bf v})}{{h^{p}}}dh,  
\end{aligned}
\]
for any $0 < t < \delta$ and any  ${\bf u}\in L^{p}(\mathbb{R}^{d}, \mathbb{R}^{d})$. 
\end{lemma}
\begin{proof}
It suffices to note that for $\rho\in L^{1}_{loc}(\mathbb{R}^{d})$ that satisfies the conditions in the statement of the proposition, we have that for any $\theta_{0} \in (0, 1)$, and any ${\bf v}\in \Lambda$,
\[
 \rho_{\theta_{0}} (r{\bf v}) = r^{p} \inf_{\theta\in [\theta_{0},1]} \rho(\theta r{\bf v})(\theta r)^{-p} = \rho(r{\bf v})  =\rho(r{\bf v}_{0}) =\tilde{\rho}(r).
\]
We may then repeat the argument in the proof of Lemma \ref{est-for-F}. 
\end{proof}

Before  proving one  of the main results,
 we make an elementary observation. \begin{lemma}\label{positive-inf-on-sector}
Let $1\leq p<\infty.$ Given a spherical cap $\Lambda$ with aperture $\theta$, there exists a positive constant $c_{0},$ depending only on $d, \theta$ and $p$, such that
\begin{equation*}
\inf_{{\bf w}\in \mathbb{S}^{d-1}}\int_{\Lambda\cap \mathbb{S}^{d-1}} |{\bf w}\cdot {\bf s}|^{p}d\sigma({\bf s}) \geq c_{0} > 0.
\end{equation*}
\end{lemma}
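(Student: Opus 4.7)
\medskip
\noindent\textbf{Proof proposal.} The plan is a standard compactness–contradiction argument, exploiting rotational invariance to reduce to a continuous function on a compact set and then using a dimension count to rule out the value zero.

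\medskip
First I would reduce to a canonical cone. Since the surface measure $\sigma$ on $\mathbb{S}^{d-1}$ is rotation invariant, for any rotation $R\in SO(d)$ one has
\[
\int_{(R\Lambda)\cap \mathbb{S}^{d-1}}|(R{\bf w})\cdot {\bf s}|^{p}\,d\sigma({\bf s}) = \int_{\Lambda\cap \mathbb{S}^{d-1}}|{\bf w}\cdot {\bf s}|^{p}\,d\sigma({\bf s}),
\]
so the quantity $\inf_{{\bf w}\in \mathbb{S}^{d-1}}\int_{\Lambda\cap \mathbb{S}^{d-1}}|{\bf w}\cdot {\bf s}|^{p}\,d\sigma({\bf s})$ depends on $\Lambda$ only through the aperture $\theta$, and therefore only on $d,\theta,p$ once $\Lambda$ is fixed.

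\medskip
Next I would establish existence of a minimizer. Define $f:\mathbb{S}^{d-1}\to [0,\infty)$ by $f({\bf w}) := \int_{\Lambda\cap \mathbb{S}^{d-1}}|{\bf w}\cdot {\bf s}|^{p}\,d\sigma({\bf s})$. Since $|{\bf w}\cdot {\bf s}|\le 1$ uniformly, dominated convergence shows $f$ is continuous on $\mathbb{S}^{d-1}$. As $\mathbb{S}^{d-1}$ is compact, $f$ attains its infimum at some ${\bf w}_{0}\in\mathbb{S}^{d-1}$, so it only remains to show $c_{0}:=f({\bf w}_{0})>0$.

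\medskip
The key (and only nontrivial) step is ruling out $f({\bf w}_{0})=0$. If it were zero, then ${\bf w}_{0}\cdot {\bf s}=0$ for $\sigma$-a.e.\ ${\bf s}\in\Lambda\cap \mathbb{S}^{d-1}$, which would force $\Lambda\cap \mathbb{S}^{d-1}\subset \{{\bf s}\in\mathbb{S}^{d-1}:{\bf w}_{0}\cdot {\bf s}=0\}$ up to a $\sigma$-null set. But the right-hand side is a great $(d-2)$-subsphere of $\mathbb{S}^{d-1}$, which has $\sigma$-measure zero, whereas $\Lambda\cap \mathbb{S}^{d-1}$ has strictly positive $\sigma$-measure since $\Lambda$ is a cone of positive aperture $\theta$. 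This contradiction gives $c_{0}>0$, and by the reduction above $c_{0}$ depends only on $d,\theta,p$. I do not foresee a serious obstacle; the only point requiring care is that the equality $|{\bf w}_{0}\cdot {\bf s}|^{p}=0$ almost everywhere on a set of positive surface measure is genuinely impossible, which is why the statement fails, as expected, when $\Lambda$ is allowed to degenerate to a lower-dimensional set.
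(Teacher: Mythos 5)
Your argument is essentially identical to the paper's: both prove continuity of ${\bf w}\mapsto \int_{\Lambda\cap\mathbb{S}^{d-1}}|{\bf w}\cdot{\bf s}|^{p}\,d\sigma({\bf s})$ on the compact sphere and then rule out a zero minimum by observing that it would force $\Lambda\cap\mathbb{S}^{d-1}$ to lie in a fixed hyperplane, contradicting $\mathcal{H}^{d-1}(\Lambda)>0$. The only addition is your explicit rotational-invariance remark explaining the dependence on $(d,\theta,p)$, which the paper leaves implicit.
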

The above lemma follows from the fact that the map
$$
{\bf w}\mapsto \int_{\Lambda\cap\mathbb{S}^{d-1}} |{\bf w}\cdot {\bf s}|^{p}d\sigma({\bf s})
$$
is continuous on the compact set $\mathbb{S}^{d-1}, $ and is positive, for otherwise the portion of the unit sphere $\Lambda $ will be orthogonal to a fixed vector which is not possible since $\mathcal{H}^{d-1}(\Lambda) > 0$. 

\subsection{Proof of Theorem  \ref{loc-compactness-main}}
From the assumption we have 
\begin{equation}\label{mainconditioncompact}
\sup_{n\geq 1} \|{\bf u}_{n}\|_{L^{p}}^{p} + \sup_{n\geq 1}\int_{\mathbb{R}^{d}}\int_{\mathbb{R}^{d}}\rho(\bdx'-\bdx)\left|\mcD({\bf u}_n)(\bdx,\bdx')\right|^{p}d\bdx'd\bdx < \infty.
\end{equation}
We will use the compactness criterion in \cite[Lemma 5.4]{Mengesha}, which is a variant of the well-known Riesz-Fr\'echet-Kolmogorov compactness criterion \cite[Chapter IV.27]{brezis-Functional}.
Let $\Lambda$ be as given in \eqref{cone-suff-condition}.  For $\delta>0$, let us introduce the matrix $Q = (q_{ij})$, where 
\[
q_{ij} = \int_{\Lambda} s_{i}s_{j} d \mathcal{H}^{d-1}({\bf s})
\]
The symmetric matrix $\mathbb{Q}$ is invertible. Indeed, the smallest eigenvalue is given by 
$$ \lambda_{min} = \min_{|\bdx| = 1} \langle \mathbb{Q}\bdx, \bdx \rangle = \min_{|\bdx| = 1} \int_{\Lambda} |\bdx\cdot {\bf s}|^{2} d \mathcal{H}^{d-1}({\bf s}) $$
 which we know is positive by Lemma \ref{positive-inf-on-sector}. 
We  define the following matrix functions   
\begin{equation*}
\mathbb{P} ({\bf z}) = d\mathbb{Q}^{-1}\frac{{\bf z}\otimes {\bf z}}{|{\bf z}|^{2}}\chi_{B^{\Lambda}_{1}}({\bf z}), \quad \quad \mathbb{P}^{\delta}({\bf z}) = \delta^{-d}\mathbb{P} \left(\frac{{\bf z}}{\delta}\right)
\end{equation*}
where $B_{1}^{\Lambda} = \{\bdx\in B_{1}: {\bdx/|{\bdx}|}\in \Lambda\}$, as defined before.  
Then for any $\delta>0$, 
\[
\int_{\mathbb{R}^{d}} \mathbb{P}^{\delta}({\bf z}) d{\bf z} = \mathbb{I}.
\]
To prove the theorem, using  \cite[Lemma 5.4]{Mengesha}, it suffices to prove that 
\begin{equation}\label{compact-closeness}
\lim_{\delta\to 0}\limsup_{n\to \infty}\|{\bf u}_{n} - \mathbb{P}^{\delta}*{\bf u}_{n}\|_{L^{p}(\mathbb{R}^{d})} = 0. 
\end{equation}
We show next that  the inequality \eqref{mainconditioncompact}  and condition \eqref{cone-suff-condition} imply \eqref{compact-closeness}.  
To see this, we begin by introducing the notation $B_{\delta}^{\Lambda} = \{\bdx\in B_{\delta}({\bfs 0}): {\bdx/|{\bdx}|}\in \Lambda\}$  and 
applying Jensen's inequality to get
\begin{equation}\label{boundwithF}
\begin{aligned}
\int_{\mathbb{R}^{d}}& |{\bf u}_{n}(\bdx) - \mathbb{P}^{\delta}*{\bf u}_{n}(\bdx)|^{p}d\bdx \leq \int_{\mathbb{R}^{d}}\left|\int_{\mathbb{R}^{d}}\mathbb{P}^{\delta}(\bdy -\bdx)({\bf u}_{n}(\bdy) - {\bf u}_{n}(\bdx)) d\bdy\right|^{p}d\bdx\\
&\leq \int_{\mathbb{R}^{d}}\left||\Lambda|\mathbb{Q}^{-1}\fint_{B^{\Lambda}_{\delta}(\bdx)}\frac{(\bdy-\bdx)}{|\bdy-\bdx|}\cdot({\bf u}_{n}(\bdy) - {\bf u}_{n}(\bdx)) \frac{(\bdy-\bdx)}{|\bdy-\bdx|}d\bdy\right|^{p}d\bdx\\
&\leq |\Lambda|^{p}\|\mathbb{Q}^{-1}\|^{p}\int_{\mathbb{R}^{d}}\left|\fint_{B^{\Lambda}_{\delta}(\bdx)}\frac{(\bdy-\bdx)}{|\bdy-\bdx|}\cdot({\bf u}_{n}(\bdy) - {\bf u}_{n}(\bdx)) \frac{(\bdy-\bdx)}{|\bdy-\bdx|}d\bdy\right|^{p}d\bdx\\
&\leq\frac{ |\Lambda|^{p}\|\mathbb{Q}^{-1}\|^{p}}{|B^{\Lambda}_{\delta}|}\int_{0}^{\delta}\int_{\Lambda}\tau^{d-1}{F}_{p}[{\bf u}_{n}](\tau{\bf v}) d \mathcal{H}^{d-1}({\bf v})d\tau\\
&\leq \frac{C(d,p)}{|B^{\Lambda}_{\delta}|} \int_{0}^{\delta}\int_{\Lambda}\tau^{d-1}{F}_{p}[{\bf u}_{n}](\tau{\bf v}) d \mathcal{H}^{d-1}({\bf v})d\tau
\end{aligned}
\end{equation} 
where as defined previously
\[
{F}_{p}[{\bf u}_{n}](\tau {\bf v})=\int_{\mathbb{R}^{d}} \left|{\bf v}\cdot({\bf u}_{n}(\bdx+\tau {\bf {\bf v}}) - {\bf u}_{n}(\bdx))\right|^{p}d\bdx.\]
Moreover, 
 the fact that $|\Lambda|^{p}\|\mathbb{Q}^{-1}\|^{p} \leq C(d,p, \Lambda)$ for any $\delta > 0$ is also used. 
 We can now apply Lemma \ref{est-for-F} and use the condition \eqref{cone-suff-condition} to obtain that 
\[
\begin{aligned}
& \frac{ C(d,p,\lambda)}{|B^{\Lambda}_{\delta}|}\int_{0}^{\delta}\int_{\Lambda}\tau^{d-1}{F}_{p}[{\bf u}_{n}](\tau{\bf v}) d \mathcal{H}^{d-1}({\bf v})d\tau\\
&\quad \leq  \frac{ C(d,p,\Lambda)}{|B^{\Lambda}_{\delta}|}\int_{0}^{\delta}\tau^{d-1} d \tau\int_{\Lambda} 
\left(
{\displaystyle
 \frac{\delta^{p}}{\displaystyle \int_{0}^{\delta} \rho_{\theta_{0}} (s{\bf v}_{0})s^{d-1} ds } \int_{0}^{\infty}\rho(h{\bf v}) h^{d-1}\frac{F_{p}[{\bf u}_{n}](h{\bf v})}{{h^{p}}}dh} 
\right) d\mathcal{H}^{d-1}({\bf v})\\
&\quad \leq C(d, p,\Lambda) \frac{\delta^{p}}{ \displaystyle \int_{0}^{\delta} \rho_{\theta_{0}} (s{\bf v}_{0})s^{d-1} ds}  |{\bf u}_{n}|_{\mathcal{S}_{\rho,p}(\mathbb{R}^{d})}. 
\end{aligned}
\]
Therefore from the boundedness assumption \eqref{mainconditioncompact} we have, 
 \[
 \begin{aligned}
  \int_{\mathbb{R}^{d}}&|{\bf u}_{n}(\bdx) - \mathbb{P}^{\delta}*{\bf u}_{n}(\bdx)|^{p}d\bdx  
  \leq C(p,d,\Lambda) \frac{\delta^{p}}{ \displaystyle
 \int_{0}^{\delta} \rho_{\theta_{0}} (s{\bf v}_{0})s^{d-1} ds}. 
  \end{aligned}\]
Equation \eqref{compact-closeness} now follows from condition \eqref{cone-suff-condition} after letting $\delta\to 0$. 
That completes the proof.

\subsection{A variant of compactness in $L^{p}_{loc}(\mathbb{R}^{d};\mathbb{R}^{d})$ }

A corollary of the compactness result, Theorem  \ref{loc-compactness-main}, 
is the following result that uses a criterion involving a sequence of kernels.
The effort made in the proof above was to show the theorem for kernel $\rho$ satisfying 
 \eqref{cone-suff-condition}, but the proposition below limits to those satisfying
  \eqref{rho-inc-rad} and  \eqref{suff-cond}.

 \begin{proposition}\label{compactness-sequence-loc}
   Let $\rho \in L^{1}_{loc}$ satisfy \eqref{rho-inc-rad} and  \eqref{suff-cond}.
Let  $\rho_{n}$ be a sequence of radial functions  satisfying \eqref{rho-inc-rad} and that 
 $ \rho_{n}\rightharpoonup \rho$ weakly in $L^{1}$ as $n\to \infty$. 
 If  
 \[
 \sup_{n\geq 1}\{ \|{\bf u_{n}}\|_{L^{p}(\mathbb{R}^{d})}  + |{\bf u}_{n}|_{\mathcal{S}_{\rho_{n}, p}}\} < \infty
 \]
 then  $\{{\bf u}_{n}\}$ is precompact in $L^{p}_{loc}(\mathbb{R}^{d};\mathbb{R}^{d})$. Moreover, if $A\subset \mathbb{R}^{d}$ is a  compact subset, the limit point of the sequence     {restricting} to $A$ is in $\mathcal{S}_{\rho,p}(A).$ 
 \end{proposition}
\begin{proof}
Using Lemma \ref{est-for-F-cor} applied to each $\rho_{n}$, we can repeat the argument in the proof of Theorem \ref{loc-compactness-main}  to obtain 
\[
\int_{\mathbb{R}^{d}}|{\bf u}_{n}(\bdx) - \mathbb{P}^{\delta}*{\bf u}_{n}(\bdx)|^{p}d\bdx  
  \leq C(p, d) \frac{\delta^{p}}{ \displaystyle
 \int_{0}^{\delta} \rho_{n}(r)r^{d-1} dr  }  \leq   C(p, d) \frac{\delta^{p}}{ \displaystyle
\int_{B_{\delta}} \rho_{n}({\bfs \xi}) d{\bfs \xi}  } \,.
\]
Now since $ \rho_{n}\rightharpoonup \rho$, weakly in $L^{1}$ as $n\to \infty$, for a fixed $\delta>0$, it follows that 
\[
\limsup_{n\to \infty} \int_{\mathbb{R}^{d}}|{\bf u}_{n}(\bdx) - \mathbb{P}^{\delta}*{\bf u}_{n}(\bdx)|^{p}d\bdx  
  \leq C(p,d) \frac{\delta^{p}}{ \displaystyle
 \int_{B_{\delta}} \rho({\bfs \xi}) d{\bfs \xi} } \,.
\]
We now let $\delta \to 0$, and use the assumption \eqref{suff-cond} to obtain 
\[
\lim_{\delta\to 0}\limsup_{n\to \infty} \int_{\mathbb{R}^{d}}|{\bf u}_{n}(\bdx) - \mathbb{P}^{\delta}*{\bf u}_{n}(\bdx)|^{p}d\bdx   = 0,
\]
from which the compactness in the $L^{p}_{loc}$ topology  follows.  

We next prove the final conclusion of the     {proposition}.  To that end, let $A \subset \mathbb{R}^{d}$ be a compact subset. 
For $\phi\in C_{c}^{\infty}(B_{1})$, we consider the convoluted sequence of function $ \phi_{\epsilon} *{\bf u}_{n}$, where $\phi_{\epsilon}(\bdz) = \epsilon^{-d} \phi(\bdz/\epsilon)$ is the standard mollifier. 
Since ${\bf u}_{n} \to {\bf u}$ strongly in $L^{p}(A;\mathbb{R}^{d})$ for a fixed $\epsilon > 0$, we have as $n\to\infty$,
\begin{equation}\label{c2-Conv}
\phi_{\epsilon} *{\bf u}_{n} \to \phi_{\epsilon} *{\bf u} \quad \text{in $C^{2}(A;\mathbb{R}^{d})$}. 
\end{equation}
Using Jensen's inequality, we obtain that for any $\epsilon > 0$, and $n$ large, 
\[
\begin{aligned}
\int_{A}\int_{A} \rho_{n}(\bdy-\bdx) &\left|\frac{(\phi_{\epsilon} *{\bf u}_{n} (\bdy) - \phi_{\epsilon} *{\bf u}_{n} (\bdx)) \cdot (\bdy - \bdx)}{ |\bdy - \bdx|^{2}}\right|^{p} d\bdy d\bdx\\
 &\leq \int_{\mathbb{R}^{d}}\int_{\mathbb{R}^{d}} \rho_{n}(\bdy-\bdx) \left| \frac{ ({\bf u}_{n} (\bdy) - {\bf u}_{n} (\bdx)) \cdot (\bdy - \bdx)}{|\bdy - \bdx|^{2}}\right|^{p}d\bdy d\bdx.
\end{aligned}
\]
Taking the limit in $n$ for fixed $\epsilon$, we obtain  for any $A$ compact that 
\[
\int_{A}\int_{A} \rho(\bdy-\bdx) \left| \frac{(\phi_{\epsilon} *{\bf u} (\bdy) - \phi_{\epsilon} *{\bf u} (\bdx)) \cdot (\bdy - \bdx)}{|\bdy - \bdx|^{2}}\right|^{p} d\bdy d\bdx \leq \sup_{n\geq 1} {|{\bf u}_{n}|_{\mathcal{S}_{\rho_{n}, p}}^{p}} < \infty.
\]
where we have used \eqref{c2-Conv} and the fact that $\rho_{n} $ converges weakly to $\rho$ in $L^{1}$.  Finally, let $\epsilon \to 0$ and  use Fatou's lemma (since $\phi_{\epsilon} *{\bf u}\to {\bf u}$ almost everywhere)  to obtain that for any compact set $A$,
\[
\int_{A}\int_{A} \rho(\bdy-\bdx) \left| \frac{({\bf u} (\bdy) - {\bf u} (\bdx)) \cdot (\bdy - \bdx)} {|\bdy - \bdx|^{2}}\right|^{p} d\bdy d\bdx \leq \sup_{n\geq 1} {|{\bf u}_{n}|_{\mathcal{S}_{\rho_{n}, p}}^{p}} < \infty,
\]
hence completing the proof. 
\end{proof}

\section{Global compactness}
\label{sec3}
In this section we prove Theorem \ref{main-compactness}. 
 We follow the approach presented in \cite{Ponce2003}. The argument relies on controlling the $L^{p}$ mass of each ${{\bf u}_{n}}$, $\int_{\Omega} |{\bf u}_{n}| ^{p} d\bdx$, near the boundary by using the bound on the seminorm to demonstrate that  in the limit when $n\to \infty$ there is no mass concentration or loss of mass at the boundary.  This type of control has been done for the sequence of kernels that converge to the Dirac Delta measure in the sense of measures. We will do the same for a fixed locally integrable kernel $\rho$ satisfying the condition \eqref{suff-cond}.

 \subsection{Some technical estimates}
In order to 
control the behavior of functions near the boundary by the semi-norm $|\cdot|_{\mathcal{S}_{p, \rho}}$,
we first present a few technical lemmas.
 \begin{lemma}\cite{Ponce2003}\label{ponce-lemma}
 Suppose that $1\leq p< \infty$ and that  $g\in L^{p}(0, \infty)$. Then there exists a constant $C = C(p)$ such that for any $\delta > 0$ and $t\in (0, \delta)$
 \[
 \int_{0}^{\delta} |g(x)|^{p} dx \leq C \delta^{p} \int_{0}^{2\delta} \frac{|g(x +t )-g(x)|^{p}}{t^{p}} dx  + 2^{p-1}\int_{\delta}^{3\delta} |g(x)|^{p} dx
 \]
 \end{lemma}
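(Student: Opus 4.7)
The plan is to prove this lemma, attributed to Ponce, via a telescoping argument along the arithmetic progression $x, x+t, x+2t, \ldots, x+kt$, with the integer $k$ chosen so that the endpoint $x+kt$ lies in the reference interval $(\delta, 3\delta)$ for every $x\in(0,\delta)$. Concretely, for the given $t\in(0,\delta)$ I would take $k=\lceil \delta/t \rceil$, which is a positive integer satisfying
\[
\delta \leq kt < \delta + t < 2\delta, \qquad (k-1)t < \delta, \qquad k \leq \delta/t + 1 \leq 2\delta/t.
\]
Consequently $x+kt\in(\delta,3\delta)$ for all $x\in(0,\delta)$, while all intermediate translates $x+jt$ with $0\leq j\leq k-1$ remain in $(0,2\delta)$.

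With this choice of $k$, I would use the telescoping identity
\[
g(x)=g(x+kt)-\sum_{j=0}^{k-1}\bigl[g(x+(j+1)t)-g(x+jt)\bigr]
\]
together with the elementary bound $|a+b|^p\leq 2^{p-1}(|a|^p+|b|^p)$ and Jensen's inequality applied to the finite sum (which contributes a factor $k^{p-1}$) to obtain the pointwise inequality
\[
|g(x)|^p \leq 2^{p-1}|g(x+kt)|^p + 2^{p-1} k^{p-1}\sum_{j=0}^{k-1}\bigl|g(x+(j+1)t)-g(x+jt)\bigr|^p.
\]

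I would then integrate over $x\in(0,\delta)$ and change variables in each resulting term. The substitution $y=x+kt$ maps $(0,\delta)$ into $(kt,\,kt+\delta)\subset(\delta,3\delta)$, yielding $\int_0^\delta|g(x+kt)|^p\,dx\leq \int_\delta^{3\delta}|g(y)|^p\,dy$, which accounts for the second term on the right-hand side of the lemma with constant $2^{p-1}$. For the $j$-th summand, the substitution $y=x+jt$ gives $\int_{jt}^{jt+\delta}|g(y+t)-g(y)|^p\,dy$, which is contained in $\int_0^{2\delta}|g(y+t)-g(y)|^p\,dy$ thanks to $(k-1)t+\delta<2\delta$. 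Collecting the $k$ identical majorants produces a factor $k\cdot k^{p-1}=k^p\leq(2\delta/t)^p$, which after pulling out $t^p$ to match the form $|g(y+t)-g(y)|^p/t^p$ on the right-hand side yields the overall constant $C=2^{2p-1}$. The only subtle point is selecting $k$ so that the endpoint $x+kt$ sits in $(\delta,3\delta)$ while the intermediate translates $x+jt$ stay within $(0,2\delta)$; the choice $k=\lceil\delta/t\rceil$ is precisely what ties these two constraints together, and everything else in the proof is a matter of routine convexity estimates and bookkeeping of integration domains.
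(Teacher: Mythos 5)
Your proof is correct and follows essentially the same telescoping argument as the paper's own proof: choose $k$ comparable to $\delta/t$ so that $x+kt$ lands in $(\delta,3\delta)$, expand $g(x)-g(x+kt)$ as a telescoping sum, apply the convexity inequality to pick up the factor $k^{p-1}$, integrate, shift variables, and bound $k^p\leq(2\delta/t)^p$ to get $C=2^{2p-1}$. The only cosmetic difference is that the paper selects $k$ as the first integer with $kt>\delta$ rather than $k=\lceil\delta/t\rceil$, but both choices satisfy the same inequalities and yield the same constant.
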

 \begin{proof}
 For a given $t\in (0, \delta)$, choose $k$ to be the first positive  integer such that  $kt > \delta$. Observe that $(k-1)t \leq \delta$, and so $kt \leq 2\delta$. 
 Now let us write 
 \[
 \begin{aligned}
 |g(x)|^{p}&\leq 2^{p-1} (|g(x  + kt) - g(x)|^{p} +  |g(x + kt)|^{p} )\\
 &\leq 2^{p-1} k^{p-1} \sum_{j=0}^{k-1}|g(x + jt + t) - g(x    {+ jt})|^{p} + 2^{p-1} |g(x + kt)|^{p}.
 \end{aligned} 
  \] 
  We now integrate in $x$ on both side over $(0, \delta)$ to obtain that 
  \[
 \begin{aligned}
 \int_{0}^{\delta}|g(x)|^{p} dx 
 &\leq 2^{p-1} k^{p-1} \sum_{j=0}^{k-1}\int_{0}^{\delta}|g(x + jt + t) - g(x + jt)|^{p}dx + 2^{p-1} \int_{0}^{\delta}|g(x + kt)|^{p}dx\\
 &\leq 2^{p-1} k^{p-1} \sum_{j=0}^{k-1}\int_{jt}^{\delta + jt}|g(x +  t) - g(x)|^{p}dx + 2^{p-1} \int_{\delta}^{3\delta}|g(x)|^{p}dx\\
 &\leq 2^{p-1} k^{p} \int_{0}^{2\delta }|g(x +  t) - g(x)|^{p}dx + 2^{p-1} \int_{\delta}^{3\delta}|g(x)|^{p}dx\,.
 \end{aligned} 
  \] 
  Recalling that $kt \leq 2\delta$, we have that $k^{p} \leq 2^{p}\delta^{p}/t^{p}$ and we finally obtain the conclusion of the lemma with $C = 2^{2p-1}$. 
\end{proof}

The above lemma will be used on functions of type $t\mapsto {\bf u}(\bdx + t{\bf v})\cdot {\bf v}$, for ${\bf v}\in \mathbb{S}^{d-1}$.  Before doing so, we need to make some preparation first. Observe that since $\Omega$ is a bounded open subset of $\mathbb{R}^{d}$ with a  Lipschitz boundary,
there exist positive constants $r_{0}$ and $\kappa$ with the property that for each point ${\bfs \xi}\in \partial \Omega$ there corresponds a coordinate system $(\bdx',x_{d})$ with $\bdx'\in \mathbb{R}^{d-1}$ and $x_{d}\in\mathbb{R}$ and a Lipschitz continuous function $\zeta:\mathbb{R}^{d-1}\to \mathbb{R}$  such that $|\zeta(\bdx')-\zeta(\bdy')|\leq \kappa|\bdx'-\bdy'|$,
\begin{equation*}
\Omega\cap B({\bfs \xi},4r_{0})= \{(\bdx',x_{d}): x_{d}>\zeta(\bdx')\}\cap B({\bfs \xi}, 4r_{0}),
\end{equation*}
and $\partial\Omega\cap B({\bfs \xi},4r_{0})= \{(\bdx',x_{d}): x_{d}=\zeta(\bdx')\}\cap B({\bfs \xi}, 4r_{0}).$
It is well known that a Lipschitz domain has a uniform interior cone $\Sigma({\bfs \xi}, \theta)$ at every boundary point ${\bfs \xi}$ such that $B({\bfs \xi}, 4\,r_{0})\cap \Sigma({\bfs \xi}, \theta)\subset \Omega.$   The uniform aperture $\theta \in(0, \pi)$ of such cones depends on the Lipschitz constant $\kappa$ of the local defining function $\zeta,$ and does not depend on ${\bfs \xi}.$  It is not difficult either  to see that for any $r\in (0, 4r_{0}),$ if $\bdy \in B_{r}({\bfs \xi}),$ then
\begin{equation*}
\text{dist}(\bdy, \partial \Omega) = \inf\{|{\bdy-(\bdx',x_{d})}|: (\bdx',x_{d})\in B_{3r}({\bfs \xi}), x_{d}=\zeta(\bdx')\}.
\end{equation*}

We now begin to work on local boundary estimates. To do that without loss of generality, after translation and rotation (if necessary) we may assume that ${\bfs \xi} = {\bfs 0}$ and
\begin{equation*}
\Omega\cap B({\bfs 0},4r_{0})= \{(\bdx',x_{d}): x_{d}>\zeta(\bdx')\}\cap B({\bfs 0}, 4r_{0}),
\end{equation*}
where $\zeta({\bfs 0}')=0,$  and $|\zeta(\bdx')-\zeta(\bdy')|\leq \kappa|\bdx'-\bdy'|$. We also assume that the Lipschitz constant $\kappa = 1/2$ and the uniform aperture $\theta = {\pi}/{4}.$
 As a consequence, $\zeta(\bdx')\leq |\bdx'|/2$ for all $\bdx'\in B_{4r_{0}}({\bfs 0}'). $  Given any $0<r<r_{0}, $ we consider the graph of $\zeta:$
\begin{equation*}
\Gamma_{r}:= \{\bdx= (\bdx', \zeta(\bdx'))\in \mathbb{R}^{d}: \bdx'\in B_{r}({\bfs 0}')\}.
\end{equation*}
We denote the upper cone with aperture ${\pi}/{4}$ by $\Sigma$ and is given by
\begin{equation*}
\Sigma = \{\bdx= (\bdx', x_{d})\in \mathbb{R}^{d}: |\bdx'|\leq x_{d}\}.
\end{equation*}
Finally we define $\Omega_{\tau} = \{\bdx\in \Omega: \text{dist}(\bdx, \partial\Omega )>\tau\}$ to be the set of points in $\Omega$ at least $r$ units away from the boundary.
Based on the above discussion we have that for any $r\in (0, r_{0}],$
\begin{equation}\label{inclusion-inequality}
\Omega\cap B_{r/2}\subset \Gamma_{r} + (\Sigma \cap B_{r}) \subset \Omega\cap B_{3r}.
 \end{equation}
 Indeed, let us pick $\bdx = (\bdx', x_{d})\in \Omega\cap B_{r/2}$. The point $\boldsymbol{{\bfs \xi}} = \bdx- (\bdx', \zeta(\bdx')) = ({\bfs 0}', x_{d}-\zeta(\bdx'))\in \Sigma,$ since $   {0<} x_{d}-\zeta(\bdx').$ Moreover, by the bound on the Lipschitz constant $|\boldsymbol{{\bfs \xi}}|=|x_{d}-\zeta(\bdx')|< r/2 + r/4< r.$
On the other hand, for any 
\[
\bdx = (\bdx_{1}', \zeta(\bdx'_{1})) + (\bdx'_{2}, (x_{2})_{d})\in \Gamma_{r} + (   {\Sigma} \cap B_{r}),\]
we have
\[\zeta(\bdx_{1}' + \bdx'_{2})-\zeta(\bdx_{1}') \leq {|\bdx'_{2}|}/{2} \leq {(x_{2})_{d}}/{2},
\] 
showing that $ \zeta(\bdx_{1}' + \bdx'_{2}) < \zeta(\bdx_{1}') + (x_{2})_{d}$ and therefore $\bdx\in \Omega$. It easily follows that $\bdx\in B_{3r},$ as well.

\begin{figure}[htbp] 
\centering
  \begin{tikzpicture}[scale=1.2]  
    \tikzset{to/.style={->,>=stealth',line width=1pt}}; 
        \draw[to] (0,0)--(5,0) node[xshift=.3cm] {$\bdx'$};;
           \draw[to] (0,0)--(0,4) node[yshift=.3cm] {$x_d$};
     \draw[line width=1pt] (-1,1)--(1,2)--(2,1.5)--(4,2.2)--(5,1.5);       
     \node at (3.2, 1.6) {$\zeta(\bdx')$};
         \node at (5.3, 1.6) {$\partial\Omega$};
       \draw (3,3) circle(1.5pt) [fill] ;
    \draw[red, dashed, line width=2pt]   (3,3)--(2.5, 1.7) node[black, yshift=.8cm] {$r$};
      \draw[dashed, line width=2pt]   (3,3)--(3.45, 2.15);
    \draw[blue, line width=1pt] (1, 3.2)--(2.5, 1.7)--(4, 3.2);
    \node at (2.5, 3.4) {$\Sigma$}; 
    
        \draw[red, line width=1pt] (0.5, 0.7)--(2.5, 1.7)--(4.5, 2.7);
      \draw[red, line width=1pt] (0.5, 2.7)--(2.5, 1.7)--(4.5, 0.7);
      
    \node at (5, 2.8) {\footnotesize slope $= 1/2$};
      \node at (5, 0.6) {\footnotesize  slope $= -1/2$};

      \end{tikzpicture}
 \caption{  $\Sigma$ is the cone with aperture $\pi/4$ (depicted by the blue lines). The Lipschitz graph $\zeta$ remains outside the double cone with aperture $\pi/2 - \arctan(1/2)$ (depicted by the red lines). The red dashed line has length $r$.   } 
 \label{fig:Lipgraph}     
\end{figure}
 For any $r\in (0, r_0)$, and $\bdx'\in B_{r} ({\bfs 0}')$, and ${\bf v} \in \Sigma\cap\mathbb{S}^{d-1}$, $\text{dist}((\bdx', \zeta(\bdx')) +r {\bf v},\partial\Omega)\geq r/\sqrt{10}$.
 Indeed, $\text{dist}((\bdx', \zeta(\bdx')) +r {\bf v},\partial\Omega)$ is larger than or equal to the length of the black dashed line, which is larger than or equal to $r\sin(\pi/4-\arctan(1/2)) =r/\sqrt{10}$.

\subsection{Near boundary estimate}
In this subsection we establish the near boundary estimate in the following  
lemma. 
 \begin{lemma}\label{boundary-control-apendix}
 Suppose that $\Omega\subset \mathbb{R}^{d}$ is a domain with Lipschitz boundary. 
 Let $1\leq p<\infty$.   Then there exist positive constants $ C_{1}, C_{2}$, $r_{0}$ and $\epsilon_{0}\in (0, 1)$ with the property that for any $r\in (0, r_{0}), $  ${\bf u}\in L^{p}(\Omega;\mathbb{R}^{d}),$ and any nonnegative and nonzero $\rho\in L^{1}_{loc}(\mathbb{R}^{d})$ that is radial, we have 
 \begin{equation*}
 \int_{\Omega}|{\bf u}|^{p}d\bdx  \leq C_{1}(r) \int_{\Omega_{\epsilon_{0}r}}|{\bf u}|^{p}d\bdx + {C_{2}  \frac{r^{p}}{\displaystyle
 \int_{B_{r}(0)}\rho({\bf h}) d{\bf h}}} \int_{\Omega}\int_{\Omega}\rho(\bdx-\bdy)\left|\mcD({\bf u})(\bdx,\bdy)\right|^{p} d\bdx\,d\bdy.
 \end{equation*}
 The constant $C_{1}$ may depend on $r$ but the other constants  $C_{2}$ and $r_{0}$ depend only on $d, p$ and the Lipschitz constant of $ \Omega$.  Here for any $\tau > 0$, we define $\Omega_{\tau} = \{\bdx\in \Omega: \text{dist}(\bdx, \partial\Omega )>\tau\}$. 
\end{lemma}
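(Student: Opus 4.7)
The plan is to localize the problem via a finite Lipschitz atlas of $\partial\Omega$ so that it suffices to prove the inequality within a single chart where, as set up in the preceding discussion, $\Omega$ is locally the epigraph of a function $\zeta$ with Lipschitz constant $1/2$ and admits the uniform interior cone $\Sigma$ of aperture $\pi/4$. I would decompose $\int_\Omega |{\bf u}|^p d\bdx$ as the sum of an integral over $\Omega_{\epsilon_0 r}$ (which yields the first term on the right up to a factor absorbed into $C_1(r)$) and an integral over the boundary strip $\Omega\setminus \Omega_{\epsilon_0 r}$, and concentrate on controlling the latter using rays pointing into the interior cone.

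Concretely, for ${\bf v}=({\bf v}',v_d)\in \Sigma\cap\mathbb{S}^{d-1}$ and ${\bfs \xi}'$ in the projected local base, introduce the one-dimensional function $g_{{\bfs \xi}',{\bf v}}(s) = {\bf u}\bigl(({\bfs \xi}',\zeta({\bfs \xi}'))+s{\bf v}\bigr)\cdot {\bf v}$. Applying Lemma \ref{ponce-lemma} to $g_{{\bfs \xi}',{\bf v}}$ at scale $\delta\asymp \epsilon_0 r$, then multiplying the resulting (pointwise in $t$) inequality by $\rho(|t{\bf v}|)\,t^{d-1} = \rho(t)t^{d-1}$ by radiality of $\rho$, integrating in $t\in (0,\delta)$ and dividing by $\int_0^\delta \rho(t)t^{d-1}dt$, which is comparable to $\int_{B_\delta(0)}\rho$, produces the estimate
$$\int_0^\delta |g_{{\bfs \xi}',{\bf v}}(s)|^p ds \leq \frac{Cr^p}{\int_{B_r(0)}\rho}\,\mathcal{E}_{{\bfs \xi}',{\bf v}} + C\int_\delta^{3\delta} |g_{{\bfs \xi}',{\bf v}}(s)|^p ds,$$
where $\mathcal{E}_{{\bfs \xi}',{\bf v}}$ is the $\rho$-weighted double integral with integrand $\rho(t)\,|g(s+t)-g(s)|^p/t^p$. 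The key observation is that this integrand equals $\rho(t{\bf v})\,|\mcD({\bf u})({\bfs \xi}+s{\bf v},\,{\bfs \xi}+(s+t){\bf v})|^p$, so that after averaging in ${\bfs \xi}'$ and ${\bf v}$ it reproduces the $\mathcal{S}_{\rho,p}$-seminorm density restricted to a conic set of displacements.

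The final step is to integrate the inequality over ${\bfs \xi}'$ in the local base and over ${\bf v}\in \Sigma\cap\mathbb{S}^{d-1}$, then change variables from $({\bfs \xi}',s)$ to $\bdx = ({\bfs \xi}'+s{\bf v}',\,\zeta({\bfs \xi}')+sv_d)\in \Omega$ for each fixed ${\bf v}$. The Jacobian determinant of this map is $v_d - \nabla\zeta({\bfs \xi}')\cdot{\bf v}'$, which is bounded below by $\tfrac{1}{\sqrt 2}-\tfrac12>0$ under $|\nabla\zeta|\leq \tfrac12$ and $v_d\geq \tfrac{1}{\sqrt 2}$, so the change of variables is valid with universal constants. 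On the left-hand side, integration of $|{\bf u}(\bdx)\cdot {\bf v}|^p$ over ${\bf v}\in \Sigma\cap\mathbb{S}^{d-1}$ is at least $c_0|{\bf u}(\bdx)|^p$ by Lemma \ref{positive-inf-on-sector}, recovering the full vector norm in the boundary strip. On the right-hand side, the interior piece $\int_\delta^{3\delta}|g_{{\bfs \xi}',{\bf v}}(s)|^p ds$ integrates over points $\bdx$ at distance at least of order $\epsilon_0 r$ from $\partial\Omega$, thanks to the preceding geometric lemma guaranteeing ${\bfs \xi}+s{\bf v}\in \Omega_{r/2}$ for ${\bfs \xi}\in\Gamma_{r/(36\sqrt{2})}$, ${\bf v}\in\Sigma\cap\mathbb{S}^{d-1}$, and $s$ of order $r$; this yields the $C_1(r)\int_{\Omega_{\epsilon_0 r}}|{\bf u}|^p$ contribution. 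The main obstacle is to organize these changes of variables so that the $\bdx$-image covers the boundary strip with controlled multiplicity and so that all the $r$-dependence is isolated in $C_1(r)$ while $C_2$ retains only dependence on $d$, $p$, and the Lipschitz character of $\Omega$.
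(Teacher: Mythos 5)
Your approach is genuinely different from the paper's, and in fact cleaner in structure. The paper does \emph{not} keep the interior term of Lemma \ref{ponce-lemma}: it first proves the local estimate \eqref{local-bry-inequality} only for ${\bf u}$ vanishing in $\Omega_{r/2}$ (in which case $g_{\bf v}^{\bfs\xi}$ vanishes on $(r,2r)$ by the geometric lemma, so the second term in Lemma \ref{ponce-lemma} drops out), obtains \eqref{lp-boundary} for such ${\bf u}$ by covering, and then extends to general ${\bf u}$ by applying \eqref{lp-boundary} to $\phi{\bf u}$ with $\phi$ a cutoff supported near $\partial\Omega$. This last step requires estimating $|\mcD(\phi{\bf u})|^p$ by a commutator argument, splitting $\Omega\times\Omega$ into three regions $A$, $B$, $C$ and bounding each; the interior $L^p$ term re-emerges there. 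Your direct route --- retain the interior term $\int_\delta^{3\delta}|g|^p$ from Lemma \ref{ponce-lemma}, push it through the same change of variables, and use the geometric ray lemma to place it inside $\Omega_{\epsilon_0 r}$ --- bypasses the cutoff and the case analysis altogether, and would yield a $C_1(r)$ that is manifestly independent of $\rho$ (the paper's commutator estimate introduces a factor $\int_{B_R}\rho/\int_{B_r}\rho$ that is awkward vis-\`a-vis the stated $\rho$-independence of the constants).

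There is, however, one concrete gap in your write-up: the choice of scale $\delta\asymp\epsilon_0 r$ with $\epsilon_0\in(0,1)$ does not deliver the claimed factor $\frac{r^p}{\int_{B_r}\rho}$. Your averaging procedure multiplies Lemma \ref{ponce-lemma} by $\rho(t)t^{d-1}$ over $t\in(0,\delta)$ and then divides by $\int_0^\delta\rho(t)t^{d-1}dt$, which yields the prefactor $\frac{C\delta^p}{\int_{B_\delta}\rho}$, not $\frac{Cr^p}{\int_{B_r}\rho}$. Since the lemma imposes \emph{no monotonicity} on $\rho$ (only radiality), the function $s\mapsto\frac{s^p}{\int_{B_s}\rho}$ need not be monotone, and $\frac{\delta^p}{\int_{B_\delta}\rho}$ with $\delta<r$ is not controlled by $\frac{r^p}{\int_{B_r}\rho}$. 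The fix is to take $\delta$ equal to (or a fixed multiple $\geq 1$ of) $r$ --- which is precisely the scale the paper uses --- so that $t$ is weighted over $(0,r)$ and the prefactor becomes a universal constant times $\frac{r^p}{\int_{B_r}\rho}$. With $\delta=r$, the interior term $\int_r^{3r}|g|^p$ still lands, after change of variables and the geometric lemma applied at each scale $s\in(r,3r)$, inside $\Omega_{r/2}\cap B_{Cr}$, and $\Omega_{r/2}\subset\Omega_{\epsilon_0 r}$ once one fixes $\epsilon_0<1/2$; the constant $\epsilon_0$ is then determined by the covering geometry rather than by the scale in Lemma \ref{ponce-lemma}. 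One must also slightly shrink $r_0$ (to $r_0/3$, say) so that the rays of length up to $3r$ remain inside the chart, but this is routine.
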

\begin{proof}
Following the above discussion, let us pick $\overline{\bfs \eta}\in \partial \Omega$ and assume without loss of generality that $\overline{\bfs \eta} = {\bfs 0},$ the function $\zeta$ that defines the boundary $\partial \Omega$ has a Lipschitz constant not bigger than $1/2$ and the aperture is $\pi/4.$
 
 Assume first that ${\bf u} \in L^{p}(\Omega;\mathbb{R}^{d})$, and vanishes on $\Omega_{r/\sqrt{10}}$. 
 Let us pick $\boldsymbol{{\bfs \xi}} = (\bdx', \zeta(\bdx'))$ such that $|\bdx'| < r$ and ${\bf v}\in \Sigma \cap \mathbb{S}^{d-1}.$
 Let us introduce the function
 \begin{equation*}
 g_{\bf v}^{{\bfs \xi}}(t) = {\bf u}({\bfs \xi}+ t{\bf v})\cdot {\bf v}, \quad t\in (0, 3 r_{0}).
 \end{equation*}
 Then for all ${\bfs \xi}\in \Gamma_{r}$ and ${\bf v}\in \Sigma\cap\mathbb{S}^{d-1}$, ${\bfs \xi} + r{\bf v}\in \Omega_{r/\sqrt{10}}$. It follows that, by assumption on the vector field ${\bf u},$ the function  $g_{\bf v}^{{\bfs {\bfs \xi}}}(t) \in L^{p}(0, 2r)$ and $g_{\bf v}^{\bfs {\bfs \xi}}(t) = 0$ for $t\in (r, 2r).$
 We then apply Lemma \ref{ponce-lemma}  
 to get a constant $C_{p}>0$ such that for any
$t\in(0, r)$,
 \begin{equation*}
 \int_{0}^{r}|{\bf u}({\bfs {\bfs \xi}}+ s{\bf v})\cdot {\bf v}|^{p}\,ds \leq C \,r^{p}\int_{0}^{r} \frac{|({\bf u}({\bfs {\bfs \xi}}+ s{\bf v} +t{\bf v} )-{\bf u}({\bfs {\bfs \xi}}+ s{\bf v}))\cdot {\bf v}|^{p}}{t^{p}}ds, 
 \end{equation*}
 where we used the fact that $u$ vanishes on $\Omega_{r/\sqrt{10}}$. 
 Noting that $\boldsymbol{{\bfs \xi}} = (\bdx', \zeta(\bdx'))$ for some $\bdx'\in B'_{r}\subset \mathbb{R}^{d-1},$ we integrate first in the above estimate with respect to $\bdx'\in B'_{r}$ to obtain that
 \begin{equation*}
 \begin{aligned}
 & \int_{B'_{r}}\int_{0}^{r}|{\bf u}({\bfs \xi}+ s{\bf v})\cdot {\bf v}|^{p}\,ds\,d\bdx' \\
 &\qquad \leq C \,r^{p}\int_{B'_{r}}\int_{0}^{r} \frac{|({\bf u}({\bfs \xi}+ s{\bf v} +t{\bf v} )-{\bf u}({\bfs \xi}+ s{\bf v}))\cdot {\bf v}|^{p}}{t^{p}}ds\,d\bdx' 
 \end{aligned}
 \end{equation*}
 {The next step involves a change of variable $\bdy = (\bdx', \zeta(\bdx')) + s{\bf v}$. Define a mapping $G: (\bdx', s) \mapsto (\bdx', \zeta(\bdx')) + s{\bf v}$. Then the Jacobian of the mapping is defined almost everywhere and is given by \[
 J_G = {\bf v} \cdot (-\nabla \zeta^T(\bdx'), 1). 
 \]
 Notice that $|J_G|$ is bounded from above and below by two constants since ${\bf v}\in \Sigma\cap\mathbb{S}^{d-1}$ and the Lipschitz constant of $\zeta$ is not bigger than $1/2$. Also notice that $G(B_r'\times (0,1)) \subset \Gamma_{r} + \Sigma\cap B_{r}$. Therefore,
 \begin{equation*}
 \begin{aligned}
 & \int_{B'_{r}}\int_{0}^{r}  \frac{|({\bf u}({\bfs \xi}+ s{\bf v} +t{\bf v} )-{\bf u}({\bfs \xi}+ s{\bf v}))\cdot {\bf v}|^{p}}{t^{p}}ds\,d\bdx' \\
 &\leq C  \int_{\Gamma_{r} + \Sigma\cap B_{r}}
 \frac{|({\bf u}(\bdy +t{\bf v} )-{\bf u}(\bdy))\cdot {\bf v}|^{p}}{t^{p}}d\bdy\\
 &\leq C  \int_{\Omega\cap B_{3r}}
 \frac{|({\bf u}(\bdy +t{\bf v} )-{\bf u}(\bdy))\cdot {\bf v}|^{p}}{t^{p}}d\bdy,
 \end{aligned}
 \end{equation*}
 where we have used \eqref{inclusion-inequality} in the last step. By some straightforward calculations, one can also find that $\Omega\cap B_{r/4} \subset G(B_r'\times (0,1))$.   
Then 
\[
\int_{\Omega\cap B_{\frac{r}{4}}}|{\bf u}(\bdy)\cdot {\bf v}|^{p}d\bdy \leq \int_{G(B_r'\times (0,1))}|{\bf u}(\bdy)\cdot {\bf v}|^{p}d\bdy \leq C \int_{B'_{r}}\int_{0}^{r}|{\bf u}({\bfs \xi}+ s{\bf v})\cdot {\bf v}|^{p}\,ds\,d\bdx'\,.
\]
 }
 It then follows from the above calculations that
 that for 
 all ${\bf v}\in \Sigma\cap\mathbb{S}^{d-1}$ and all $t\in (0,r),$
 \begin{equation}\label{local-bry-inequality}
 \int_{\Omega\cap B_{ \frac{r}{4}}}|{\bf u}(\bdy)\cdot {\bf v}|^{p}d\bdy \leq C \, r^p\int_{\Omega\cap B_{3r}} \frac{|({\bf u}(\bdy +t{\bf v} )-{\bf u}(\bdy))\cdot {\bf v}|^{p}}{t^{p}}d\bdy. 
 \end{equation}
Multiplying the left hand side of \eqref{local-bry-inequality} by $\rho(t{\bf v})t^{d-1}$ and integrating in $t\in(0, r)$ and in ${\bf v}\in \Sigma\cap \mathbb{S}^{d-1}$, we get
 \begin{equation*}
 \begin{aligned}
 &\int_{0}^{r}\int_{\Sigma\cap \mathbb{S}^{d-1}} \int_{\Omega\cap B_{\frac{r}{4}}}|{\bf u}(\bdy)\cdot {\bf v}|^{p}\rho(t{\bf v}) t^{d-1}d\bdy\, d\sigma({\bf v}) \,dt\\
 &\qquad = \int_{\Omega\cap B_{\frac{r}{4}}}\int_{\Sigma\cap B_{r}}|{\bf u}(\bdy)\cdot \frac{{\bf z}}{|\bdz|}|^{p}\rho(\bdz)d\bdz \,d\bdy \,.
 \end{aligned}
 \end{equation*}
 Using Lemma \ref{positive-inf-on-sector}, we observe that
 \begin{equation}\label{esti-for-u}
 \begin{aligned}
&  \int_{\Omega\cap B_{\frac{r}{4}}}\int_{\Sigma\cap B_{r}}|{\bf u}(\bdy)\cdot \frac{{\bf z}}{|\bdz|}|^{p}\rho(\bdz)d\bdz \,d\bdy\\
 &\qquad
 = \int_{\Omega\cap B_{\frac{r}{4}}}|{\bf u}(\bdy) |^{p}\int_{\Sigma\cap B_{r}}\left|\frac{{\bf u}(\bdy)}{|{\bf u}(\bdy)|}\cdot \frac{{\bf z}}{|\bdz|}\right|^{p}\rho(\bdz)d\bdz \,d\bdy\\
 &\qquad \geq \left(\int_{0}^{r}t^{d-1}\rho(t) dt\right) \int_{\Omega\cap B_{{r \over 4}}}|{\bf u}(\bdy) |^{p} \int_{\Sigma\cap \mathcal{S}^{d-1}}\left|\frac{{\bf u}(\bdy)}{|{\bf u}(\bdy)|}\cdot {\bf w}\right|^{p}d \mathcal{H}^{d-1}({\bf w}) \,d\bdy \\
 &\qquad \geq c_{0} \left(\int_{B_{r}}\rho(   {{\bfs \xi}})d   {{\bfs \xi}}\right) \int_{\Omega\cap B_{\frac{r}{4}}}|{\bf u}(\bdy)|^{p}d\bdy \,.
 \end{aligned}
 \end{equation}
 Similarly, we have 
 \begin{equation}\label{esti-for-Du}
  \begin{aligned}
 \int_{0}^{r}\int_{\Sigma\cap \mathbb{S}^{d-1}}& \int_{\Omega\cap B_{3r}} \frac{|({\bf u}(\bdy +t{\bf v} )-{\bf u}(\bdy))\cdot {\bf v}|^{p}}{t^{p}}\rho(t{\bf v}) t^{d-1}d\bdy\,d\sigma({\bf v})\,d t\\
 &= \int_{\Omega\cap B_{3r}}\int_{\Sigma\cap B_{r}}\frac{|({\bf u}(\bdy +{\bf z} )-{\bf u}(\bdy))\cdot \frac{{\bf z}}{|\bdz|}|^{p}}{|\bdz|^{p}}\rho(|{\bf z}|) d{\bf z} d\bdy\,\\
 &\leq \int_{\Omega\cap B_{4r}}\int_{\Omega\cap B_{4r}}|\mcD({\bf u})(\bdx,\bdy)|^p\rho(\bdx-\bdy)d\bdy\, d{\bf x}.
  \end{aligned}
 \end{equation}
Combining inequalities \eqref{esti-for-u}, \eqref{esti-for-Du} and  \eqref{local-bry-inequality} we obtain that 
 \begin{equation}\label{est-at-bdry-point}
 c_{0} \int_{\Omega\cap B_{\frac{r}{4}}}|{\bf u}(\bdy)|^{p}d\bdy \leq \,\frac{r^{p}}{\displaystyle
 \int_{B_r} \rho(   {{\bfs \xi}})d   {{\bfs \xi}} }\,\int_{\Omega\cap B_{4r}}\int_{\Omega\cap B_{4r}}|\mcD({\bf u})(\bdx,\bdy)|^p\rho(\bdx-\bdy)d\bdy\, d{\bf x} 
 \end{equation}
 for some positive constant $c_{0}$ which only depends on $d, p$ and the Lipschitz constant of the domain. In particular, 
 the estimate \eqref{est-at-bdry-point} holds true at all boundary points $   {\overline{\bfs \eta}} \in \partial \Omega$. 

The next argument is used in the proof of \cite[Lemma 5.1]{Ponce2003}. By applying standard covering argument, it follows from 
the inequality \eqref{est-at-bdry-point} that there exist positive constants $\epsilon_{0}\in (0, 1/(2\sqrt{10}))$ and $C$ with the property that for all $r\in(0, r_{0}),$   such that for all ${\bf u}\in L^{p}(\Omega;\mathbb{R}^{d})$ that vanishes in $\Omega_{{r}/{\sqrt{10}}}$ 
\begin{equation}\label{lp-boundary}
\int_{\Omega\setminus\Omega_{2\epsilon_{0}r }} |{\bf u}|^{p}d\bdx\leq  C \,\frac{r^{p}}{\displaystyle
\int_{B_r} \rho(   {{\bfs \xi}})d   {{\bfs \xi}} }\int_{\Omega}\int_{\Omega} |\mcD({\bf u})(\bdx,\bdy)|^p\rho(\bdx-\bdy)d\bdy\, d{\bf x}.
\end{equation}
The positive constants $\epsilon_{0}$ and $C$ depend only on $p$ and the Lipschitz character of the boundary of $\Omega.$ For ease of calculation, set $\tilde{r} = 2r/\sqrt{10}$. Then $\tilde{r}/2 = r/\sqrt{10}$. 

Now let ${\bf u}\in L^{p}(\Omega;\mathbb{R}^{d}),$ and let $\phi\in C^{\infty}(\Omega)$ be such that: $\phi(\bdx)=0$, if $\bdx\in \Omega_{\tilde{r}/2}$; $0\leq \phi(\bdx)\leq 1,$ if $\bdx\in \Omega_{\tilde{r}/4}\setminus \Omega_{\tilde{r}/2}$; $\phi(\bdx)=1,$ if $\bdx\in \Omega\setminus \Omega_{\tilde{r}/4}$
and $|\nabla \phi|\leq C/{r}$ on $\Omega.$
Applying \eqref{lp-boundary} to the vector field $\phi(\bdx){\bf u}(\bdx)$, we obtain that 
\begin{equation*}
\begin{aligned}
\int_{\Omega\setminus \Omega_{\epsilon_{0} \,r }} |{\bf u}|^{p}d\bdx &\leq C\frac{r^{p}}{\displaystyle
 \int_{B_r} \rho(   {{\bfs \xi}})d   {{\bfs \xi}} }\int_{\Omega}\int_{\Omega} |\mcD(\phi{\bf u})(\bdx,\bdy)|^p\rho(\bdx-\bdy)d\bdy\, d{\bf x}
\end{aligned}
\end{equation*}
We may rewrite    {$\mcD(\phi{\bf u})$} as follows 
\[
\mcD(\phi{\bf u})(\bdx,\bdy) = (\phi(\bdx) + \phi(\bdy))\mcD({\bf u})(\bdx,\bdy) - \left({\phi(\bdx){\bf u}(\bdy) - \phi(\bdy){\bf u}(\bdx)\over |\bdy-\bdx|}\right)\cdot {(\bdy-\bdx)\over |\bdy-\bdx|}. 
\]
It then follows that 
\[
\begin{aligned}
&
\int_{\Omega}\int_{\Omega} |\mcD(\phi{\bf u})(\bdx,\bdy)|^p\rho(\bdx-\bdy)d\bdy\, d{\bf x} \\
&\quad \leq 2^{p-1} \int_{\Omega}\int_{\Omega} |[\phi(\bdx) + \phi(\bdy)]\mcD({\bf u})(\bdx,\bdy)|^p\rho(\bdx-\bdy)d\bdy\, d{\bf x} \\ 
&\qquad + 2^{p-1}\int_{\Omega}\int_{\Omega} \left|{\phi(\bdx){\bf u}(\bdy) - \phi(\bdy){\bf u}(\bdx)\over |\bdy-\bdx|}\cdot {(\bdy-\bdx)\over |\bdy-\bdx|}\right|^p\rho(\bdx-\bdy)d\bdy\, d{\bf x}\\
&\quad =2^{p-1} \left(I_{1} + I_{2}\right).
\end{aligned}
\]
The first term $I_{1}$ can be easily estimated as 
\[\begin{aligned}
I_{1} &= \int_{\Omega}\int_{\Omega} |[\phi(\bdx) + \phi(\bdy)]\mcD({\bf u})(\bdx,\bdy)|^p\rho(\bdx-\bdy)d\bdy\, d{\bf x}\\
& \leq 2 \int_{\Omega}\int_{\Omega} |\mcD({\bf u})(\bdx,\bdy)|^p\rho(\bdx-\bdy)d\bdy\, d{\bf x}.
\end{aligned}
\]
Let us estimate the second term, $I_{2}$. We first break it into three integrals. 
\[
\begin{aligned}
I_{2} &= 
\int_{\Omega}\int_{\Omega} \left|{\phi(\bdx){\bf u}(\bdy) - \phi(\bdy){\bf u}(\bdx)\over |\bdy-\bdx|}\cdot {(\bdy-\bdx)\over |\bdy-\bdx|}\right|^p\rho(\bdx-\bdy)d\bdy\, d{\bf x}\\
& = \iint_{A} + \iint_{B} +  \iint_{C}
\end{aligned}
\]
where $A = \Omega\setminus \Omega_{\tilde{r}/4} \times \Omega\setminus \Omega_{\tilde{r}/4}$, $B=(\Omega\setminus \Omega_{\tilde{r}/8})\times \Omega_{\tilde{r}/4}  \cup  \left(\Omega_{\tilde{r}/4} \times (\Omega\setminus \Omega_{\tilde{r}/8})\right)$ and $C = \Omega\times\Omega \setminus (A\cup B)$. 
We estimate each of these integrals. Let us begin with the simple one: $ \iint_{A}$. After observing that $\phi(\bdx)=\phi(\bdy) =1$ for all $\bdx, \bdy\in \Omega\setminus \Omega_{\tilde{r}/4},$ we have that
\[
\iint_{A} = \int_{\Omega\setminus \Omega_{\tilde{r}/4}}\int_{\Omega\setminus \Omega_{\tilde{r}/4}}  |\mcD({\bf u})(\bdx,\bdy)|^p\rho(\bdx-\bdy)d\bdy\, d{\bf x}, 
\]
and the    {latter} is bounded by the semi norm. 
Next, we note that set $B$ is symmetric with respect to the diagonal, and as a result, 
\[
\iint_{B}  = 2 \int_{\Omega\setminus \Omega_{\tilde{r}/8}} \int_{\Omega_{\tilde{r}/4}}
\]
and when $(\bdx, \bdy) \in (\Omega\setminus \Omega_{\tilde{r}/8})\times \Omega_{\tilde{r}/4} $, we    {have} $\phi(\bdx) = 1$, and so we have 
\[
\begin{aligned}
\iint_{B} &= 2\int_{\Omega\setminus \Omega_{\tilde{r}/8}} \int_{\Omega_{\tilde{r}/4}} \left|{{\bf u}(\bdy) - \phi(\bdy){\bf u}(\bdx)\over |\bdy-\bdx|}\cdot {(\bdy-\bdx)\over |\bdy-\bdx|}\right|^p\rho(\bdx-\bdy)d\bdy\, d{\bf x}\\
&\leq 
2^{p}\int_{\Omega\setminus \Omega_{\tilde{r}/8}} \int_{\Omega_{\tilde{r}/4}}  \left|\phi(\bdy)\mcD({\bf u})(\bdx,\bdy)\right|^p\rho(\bdx-\bdy)d\bdy\, d{\bf x}\\
&\qquad  + 2^{p}\int_{\Omega\setminus \Omega_{\tilde{r}/8}} \int_{\Omega_{\tilde{r}/4}} \left|{{\bf u}(\bdy)\over |\bdy-\bdx|}\right|^p\rho(\bdx-\bdy)d\bdy\, d{\bf x}\\
&\leq 2^{p}\int_{\Omega\setminus \Omega_{\tilde{r}/8}} \int_{\Omega_{\tilde{r}/4}}\left|\mcD({\bf u})(\bdx,\bdy)\right|^p\rho(\bdx-\bdy)d\bdy\, d{\bf x}\\
&\qquad  + 
\frac{2^{4p}}{r^{p}}\int_{\Omega\setminus \Omega_{\tilde{r}/8}} \int_{\Omega_{\tilde{r}/4}} |{\bf u}(\bdy)|^p\rho(\bdx-\bdy)d\bdy\, d{\bf x}\\
\end{aligned}
\]
where we have used the fact that 
$\text{dist}(\Omega\setminus \Omega_{\tilde{r}/8}, \Omega_{\tilde{r}/4})=\tilde{r}/{8}$. 
As a consequence we have that 
\[
\iint_{B} \leq 2^{p} \int_{\Omega}\int_{\Omega} |\mcD({\bf u})(\bdx,\bdy)|^p \rho(\bdy-\bdx)d\bdy\,d\bdx +  \frac{2^{4p}}{r^{p}} \left(\int_{|{\bf h}| > \tilde{r}/{8}} \rho({\bf h}) d{\bf h} \right) \int_{\Omega_{\tilde{r}/4}} |{\bf u}(\bdy)|^{p} d\bdy.
\]
To estimate the integral on $C$, we first observe that  for any $(\bdx, \bdy) \in C$, then $\text{dist}(\bdx, \partial \Omega)\geq \tilde{r}/{8}$ and $\text{dist}(\bdy, \partial \Omega)\geq \tilde{r}/{8}$. Using this information, adding and subtracting $\phi(\bdx){\bf u}(\bdx)$ we can then estimate as follows:
\[
\begin{aligned}
\iint_{C} &\leq 2^{p-1} \iint_{C} |\mcD({\bf u})(\bdx,\bdy)|^p \rho(\bdy-\bdx)d\bdy\,d\bdx \\
&\qquad
+ 2^{p-1}\iint_{C}|{\bf u}(\bdx)|^{p} \frac{|\phi(\bdx )-\phi(\bdy)|^{p}}{|\bdx-\bdy|^{p}}\rho(|\bdx-\bdy|)d\bdy\, d{\bf x} \\
& \leq 2^{p-1} \int_{\Omega}\int_{\Omega} |\mcD({\bf u})(\bdx,\bdy)|^p \rho(\bdy-\bdx)d\bdy\,d\bdx\\
& \qquad +  \frac{C}{r^{p}} \int_{B_{R}} \rho({\bf h}) d{\bf h} \int_{\Omega_{{r\over8}}}|{\bf u}(\bdx)|^{p} d\bdx
\end{aligned}
\]
where we used the estimate $|\nabla \phi| \leq {C\over r}$, and denoted $R = \text{diam}(\Omega)$. 

We then conclude that there exists a universal constant $C > 0$ such that for any $r$ small
\[
\begin{aligned}
\int_{\Omega\setminus \Omega_{\epsilon_{0} \,r }} |{\bf u}|^{p}d\bdx &
\leq C \left({r^{p}\over \displaystyle
 \int_{B_{r}}\rho(|{\bdy}|)d\bdy} \int_{\Omega}\int_{\Omega} |\mcD({\bf u})(\bdx,\bdy)|^p \rho(\bdy-\bdx)d\bdy\,d\bdx \right.\\
&\qquad
+\left.  \frac{1}{r^{p}}\int_{B_{R}} \rho({\bf h}) d{\bf h}  \int_{\Omega_{\tilde{r}/{8}}} |{\bf u}|^{p} d\bdx\right) .
\end{aligned}
\]
It then follows that 
\begin{equation*}
\begin{aligned}
\int_{\Omega}|{\bf u}|^{p}d\bdx & = \int_{\Omega_{\epsilon_{0}r}}|{\bf u}|^{p}d\bdx + \int_{\Omega\setminus \Omega_{\epsilon_{0}r}}|{\bf u}|^{p}d\bdx \\
&\leq \int_{\Omega_{\epsilon_{0} r}}|{\bf u}|^{p}d\bdx  + C\,{r^{p}\over \displaystyle
\int_{B_{r}}\rho(|{\bdy}|)d\bdy}  \int_{\Omega}\int_{\Omega} |\mcD({\bf u})(\bdx,\bdy)|^p \rho(\bdy-\bdx)d\bdy\,d\bdx \\
&\qquad+ C {\|\rho\|_{L^{1}(B_{R})}\over r^{p}}\int_{\Omega_{\tilde{r}/8}}|{\bf u}|^{p}d\bdx.
\end{aligned}
\end{equation*}
We hence complete the proof of Lemma \ref{boundary-control-apendix}
after choosing $\epsilon_{0}$ sufficiently small, say for example $\epsilon_{0}<{1}/{4\sqrt{10}}$, that
\begin{equation*}
\int_{\Omega}|{\bf u}|^{p}d\bdx \leq C(r)\int_{\Omega_{\epsilon_{0}r}}|{\bf u}|^{p}d\bdx + C\,{r^{p}\over 
\displaystyle \int_{B_{r}(0)}\rho(|{\bdy}|)d\bdy} \int_{\Omega}\int_{\Omega} |\mcD({\bf u})(\bdx,\bdy)|^p \rho(\bdy-\bdx)d\bdy\,d\bdx,
\end{equation*}
as desired.

\subsection{Compactness in $L^{p}(\Omega)$: proof of Theorem \ref{main-compactness}}
Let ${\bf u}_{n}$ be a bounded sequence in $\mathcal{S}_{\rho,p} (\Omega)$. Let $\phi_{j} \in C^{\infty}_{0}(\Omega)$ such that $\phi_{j} \equiv 1$ in $\Omega_{1/j}$. Then the sequence $\{\phi_{j}{\bf u}_{n}\}_{n}$ is bounded in  $\mathcal{S}_{\rho,p} (\mathbb{R}^{d})$, and so by Theorem \ref{loc-compactness-main}, $\phi_{j}{\bf u}_{n}$ is precompact in $\Omega$. In particular, $\{{\bf u}_{n}\}$ is relatively compact in $L^{p}(\Omega_{j})$. From this one can extract a subsequence ${\bf u}_{n_{j}}$ such that ${\bf u}_{n_{j}} \to {\bf u}$ in $L^{p}_{loc}(\Omega)$. It is easy to see that ${\bf u}\in L^{p}(\Omega)$. In fact, using the pointwise convergence and Fatou's lemma, we can see that ${\bf u}\in \mathcal{S}_{\rho, p}(\Omega)$. What remains is to show that ${{\bf u}_{n_{j}}} \to {\bf u}$ in $L^{p}(\Omega)$.    To that end, we apply Lemma \ref{boundary-control-apendix} for the function ${\bf u}_{n_{j}} - {\bf u}$, to obtain that 
\[
\int_{\Omega}|{\bf u}_{n_{j}} - {\bf u}|^{p}d\bdx \leq C_{1}(r) \int_{\Omega_{\epsilon_{0}r}}|{\bf u}_{n_{j}} - {\bf u}|d\bdx + C_{2} \frac{r^{p}}{\displaystyle  \int_{B_{r}}\rho({\bf h}) d{\bf h} } |{\bf u}_{n_{j}} - {\bf u}|^{p}_{\mathcal{S}_{\rho, p}(\Omega)} 
\]
for all small $r$.
We now fix $r$ and let $j\to \infty$  to obtain that 
\[
\limsup_{j\to \infty}\int_{\Omega}|{\bf u}_{n_{j}} - {\bf u}|^{p}d\bdx \leq  C \frac{r^{p}}{\displaystyle  \int_{B_{r}}\rho({\bf h}) d{\bf h} } (1 + |{\bf u}|^{p}_{\mathcal{S}_{\rho, p}}). 
\]
We then let $r\to 0$, to obtain that $\limsup_{j\to \infty}\int_{\Omega}|{\bf u}_{n_{j}} - {\bf u}|^{p}d\bdx = 0.$
\end{proof}

\subsection{Compactness for a sequence of kernels: proof of Theorem \ref{compactness-sequence}}
Arguing as above and by Proposition \ref{compactness-sequence-loc}, we have that there is a subsequence ${\bf u}_{n_{j}} \to u$ in $L^{p}_{loc}(\Omega)$, and that $u\in \mathcal{S}_{\rho, p}(\Omega)$. To conclude, we apply Lemma \ref{boundary-control-apendix} for the function ${\bf u}_{n_{j}} - {\bf u}$ corresponding to $\rho_{n_{j}}$ to obtain
\[
\int_{\Omega}|{\bf u}_{n_{j}} - {\bf u}|^{p}d\bdx \leq C_{1}(r) \int_{\Omega_{\epsilon_{0}r}}|{\bf u}_{n_{j}} - {\bf u}|d\bdx + C_{2} \frac{r^{p}}{\displaystyle  \int_{B_{r}}\rho_{n_{j}}({\bf h}) d{\bf h} } |{\bf u}_{n_{j}} - {\bf u}|^{p}_{\mathcal{S}_{\rho_{n_j}, p}(\Omega)} 
\]
By assumption $\rho_{n_j}\leq C \rho$ and so $|{\bf u}_{n_{j}} - {\bf u}|^{p}_{\mathcal{S}_{\rho_{n_j}, p}(\Omega)}\leq C |{\bf u}_{n_{j}} - {\bf u}|^{p}_{\mathcal{S}_{\rho, p}(\Omega)}$. We then let 
 $j\to \infty$ and apply the weak convergence of $\rho_{n}$ to obtain that \[
\limsup_{j\to \infty}\int_{\Omega}|{\bf u}_{n_{j}} - {\bf u}|^{p}d\bdx \leq  C \frac{r^{p}}{\displaystyle  \int_{B_{r}}\rho({\bf h}) d{\bf h} } (1 + |{\bf u}|^{p}_{\mathcal{S}_{\rho, p}}). \]
Finally, we let $r\to 0$ to conclude the proof. 

\subsection{Poincar\'e-Korn type inequality: proof of Corollary \ref{Poincare}}
We recall that given $V\subset L^{p}(\Omega;\mathbb{R}^{d})$ satisfying the hypothesis of the corollary, there exists a constant $P_{0}$ such that for any ${\bf u}\in V,$
\begin{equation}\label{limit-poincare}
 \int_{\Omega} |{\bf u}|^{p}d\bdx \leq P_{0} \int_{\Omega}\int_{\Omega}  \rho(\bdy - \bdx)\left|\frac{({\bf u}(\bdy) - {\bf u}({\bdx}))}{|\bdy-\bdx|} \cdot \frac{(\bdy -\bdx)}{|\bdy - \bdx|}\right|^{p}d\bdy d\bdx.
\end{equation}
This result is proved in \cite{Du-Navier1} or \cite{Mengesha-Du}. 
We take $P_{0}$ to be the best constant.  We claim that given any $\epsilon > 0$, there exists $N=N(\epsilon) \in \mathbb{N}$ such that for all $n\geq N$, \eqref{seq-poincare} holds for $C = P_{0} + \epsilon$. We prove this by contradiction. Assume otherwise and that there exists $C > P_{0}$ such that for every $n$, there exists ${\bf u}_{n}\in V\cap L^{p}(\Omega;\mathbb{R}^{d})$, $\|{\bf u}_{n}\|_{L^{p}} =1$, and 
\[
\int_{\Omega}\int_{\Omega}  \rho_{n}(\bdy - \bdx)\left|\frac{({\bf u}_{n}(\bdy) - {\bf u}_{n}({\bdx}))}{|\bdy-\bdx|} \cdot \frac{(\bdy -\bdx)}{|\bdy - \bdx|}\right|^{p}d\bdy d\bdx < \frac{1}{C}.
\] 
By Theorem \ref{compactness-sequence}, ${\bf u}_{n}$ is precompact in $L^{p}(\Omega;\mathbb{R}^{d})$ and therefore any limit point ${\bf u}$  will have $\|{\bf u}\|_{L^{p}} = 1$, and will be in $V\cap L^{p}(\Omega;\mathbb{R}^{d})$. Moreover, following the same procedure as in the proof of     {Proposition} \ref{compactness-sequence-loc}, we obtain that 
\[
\begin{aligned}
\int_{\Omega}\int_{\Omega}  &\rho(\bdy - \bdx)\left|\frac{({\bf u}(\bdy) - {\bf u}({\bdx}))}{|\bdy-\bdx|} \cdot \frac{(\bdy -\bdx)}{|\bdy - \bdx|}\right|^{p}d\bdy d\bdx \\
&\leq \liminf_{n\to \infty}\int_{\Omega}\int_{\Omega}  \rho_{n}(\bdy - \bdx)\left|\frac{({\bf u}_{n}(\bdy) - {\bf u}_{n}({\bdx}))}{|\bdy-\bdx|} \cdot \frac{(\bdy -\bdx)}{|\bdy - \bdx|}\right|^{p}d\bdy d\bdx \leq {1 \over C } < {1\over P_{0}}
\end{aligned}
\]
which gives the desired contradiction since $P_{0}$ is the best constant in \eqref{limit-poincare}. 

\section{Discussions}
In this work, we have presented a set of sufficient conditions that guarantee a compact inclusion of a set of $L^{p}$-vector fields in the Banach space of $L^{p}$ vector fields.  The criteria are nonlocal and given with respect to nonlocal interaction kernels that may not be necessarily radially symmetric. 
 We note that, in addition to the mathematical generality, relaxing the radial symmetry assumption on nonlocal interactions can be useful
 when modeling anisotropic behavior and directional transport.
 The $L^{p}$-compactness is established for a sequence of vector fields where the nonlocal interactions involve only part of their components, so that the results and discussions
represent a significant departure from those known for scalar fields.  
It is not clear yet 
{to what extent the conditions assumed here can be weakened to reach the same conclusions.}
In this regard, there are still some outstanding questions in relation to the set of minimal conditions on the interaction kernel as well as on the set of vector fields that imply $L^{p}$-compactness.  An application of the compactness result that will be explored elsewhere includes designing approximation schemes for nonlocal system of equations of peridynamic-type similar to the one done in \cite{Tian-Du} for nonlocal equations.  
\section{Acknowledgments}

Q. Du's research is partially supported by US National Science Foundation grant     {DMS-1937254 and DMS-2309245}. 

T. Mengesha's research is supported by US National Science Foundation grant DMS-1910180 and DMS-2206252. 

X. Tian's research is supported by US National Science Foundation grant DMS-2111608 and DMS-2240180. 

The authors thank Zhaolong Han for helping improve the proof of Lemma 4.2. 
\appendix
\section{Compactness in $L^{p}_{loc}$ topology}
The following theorem as well as the proof we present here is inspired by the compactness result proved in \cite{Jarlocalcompactness} (see also \cite{Guy-thesis}) for scalar functions which uses the more flexible nonintegrability condition of the ${\rho({\bf z}) \over |{\bf z}|^{p}}$ than the one stated in \eqref{suff-cond}.
\begin{theorem}[$L^{p}_{loc}$ compactness]\label{loc-compactness-S}Suppose that $1\leq p  < \infty$. 
Let $\rho \in L^{1}(\mathbb{R}^{d})$ be a nonnegative radial function satisfying
\begin{equation}\label{suff-loc-compactness}
\lim_{\delta\to 0 } \int_{|{\bf z}| > \delta} {\rho({\bf z})\over |{\bf z}|^{p}}d{\bf z} =\infty. 
\end{equation}
 Suppose also that $\{{\bf u}_{n}\}$ is a sequence of vector fields that is bounded in $\mathcal{S}_{\rho,p}(\mathbb{R}^{d})$. 
Then for any $D\subset \mathbb{R}^{d}$ open and bounded,  the sequence $\{{\bf u}_{n}|_{D}\}$ is precompact in $L^{p}(D; \mathbb{R}^{d})$. 
\end{theorem}
  
As stated earlier, for radial functions with compact support, condition \eqref{suff-loc-compactness} is weaker than \eqref{suff-cond}. 
  Indeed, \eqref{suff-cond} implies that $\rho(\bdz) |\bdz|^{-p} $ is not integrable near ${\bfs 0}$ which implies \eqref{suff-loc-compactness}. One the other hand, the kernel $\rho({\bf z})= |{\bf z}|^{-d-p} \chi_{B_{1}({\bfs 0})}({\bf z})$ satisfies \eqref{suff-loc-compactness} but not \eqref{suff-cond}. 
 
Similar to the argument we gave in section 2, the proof of the theorem will make use of the following  variant of the Riesz-Fr\'echet-Kolomogorov theorem \cite{anotherlook,Mengesha}. 
\begin{lemma}(\cite[Lemma 5.4]{Mengesha}) \label{RFK-berzis}
Let the sequence $\{\mathbb{G}^\delta\}_{\delta>0}\subset L^1(\mathbb{R}^{d}; \mathbb{R}^{d\times d})$ be an approximation to the identity. That is 
\[
\forall \delta>0, \int_{\mathbb{R}^{d}} \mathbb{G}^\delta (\bdx)d\bdx = \mathbb{I}_d,\quad \text{for any $r>0$, } \lim_{\delta \to 0}\int_{|\bdx|>r} \mathbb{G}^\delta (\bdx)d\bdx  = \mathbf{0}
\]
If $\{{\bf f}_n\}_{n}$ is a bounded sequence in $L^{p}(\mathbb{R}^{d};\mathbb{R}^{d})$ and 
\[
\lim_{\delta \to 0} \limsup_{n\to \infty} \|{\bf f}_{n} -  \mathbb{G}^\delta \ast {\bf f}_{n}\|_{L^{p}} = 0, 
\]
then  for any open and bounded subset $D$ of $\mathbb{R}^{d}$ the sequence $\{{\bf f}_{n}\}$ is relatively compact in $L^{p}(D; \mathbb{R}^{d})$. 
\end{lemma}
 
 \begin{proof}[Proof of Theorem  \ref{loc-compactness-S}]
From the assumption we have 
\begin{equation}\label{mainconditioncompact-appen}
\sup_{n\geq 1} \|{\bf u}_{n}\|_{L^{p}}^{p} + \sup_{n\geq 1}\int_{\mathbb{R}^{d}}\int_{\mathbb{R}^{d}}\rho(\bdx'-\bdx)\left|\mcD({\bf u}_n)(\bdx,\bdx')\right|^{p}d\bdx'd\bdx < \infty.
\end{equation}
Let $\Gamma^{\delta}({\bf z}) = {\rho({\bf z}) \over |{\bf z}|^{p}} \chi_{\complement B_{\delta}}({\bf z})$. Then for each $\delta$, $\Gamma^\delta \in L^1(\mathbb{R}^{d})$ and is radial, since $\rho$ is radial. Moreover, by assumption on $\rho$ \eqref{suff-loc-compactness}, $\|\Gamma^\delta\|_{L^1} \to \infty$ as $\delta \to 0. $ We next introduce the following sequence of integrable matrix functions 
\[
\mathbb{G}^{\delta}({\bf z}) = {d \,\Gamma^{\delta}({\bf z})\over \|\Gamma^\delta\|_{L^1} } {{\bf z}\otimes {\bf z} \over |{\bf z}|^{2}}.\]
Notice that since $\Gamma^\delta$ is radial, we have 
\[\int_{\mathbb{R}^{\delta}} {\Gamma^{\delta}({\bf z}) z_i^{2} \over |{\bf z}|^{2}}d{\bf z} =  { \|\Gamma^\delta\|_{L^1}\over d},\quad i=1,\cdots,d.\]
As a consequence $\{\mathbb{G}^{\delta}\}$ is an approximation to the identity.  
Now for each $n$ we have 
\[
\begin{aligned}
\| {\bf u}_{n} - \mathbb{G}^{\delta} \ast{\bf u}_n\|_{L^{p}}^{p} &= \int_{\mathbb{R}^{d}}\left|\int_{\mathbb{R}^{d}}\mathbb{G}^{d} (\bdy-\bdx)({\bf u}_n(\bdx) - {\bf u}_n(\bdy)) d\bdy \right|^{p} d\bdx\\
& \leq d^{p}  \int_{\mathbb{R}^{d}}\left|\int_{\mathbb{R}^{d}} {\Gamma^{\delta}({\bf z})\over \|\Gamma^\delta\|_{L^1} }\left|{\bdz\over |\bdz|}\cdot({\bf u}_n(\bdx) - {\bf u}_n(\bdz +\bdx)) \right|d\bdy \right|^{p} d\bdx\\
&\leq   d^{p}  \int_{\mathbb{R}^{d}}\int_{\mathbb{R}^{d}} \left|{\bdz\over |\bdz|}\cdot({\bf u}_n(\bdx) - {\bf u}_n(\bdz +\bdx)) \right|^{p} {\Gamma^{\delta}({\bf z})\over \|\Gamma^\delta\|_{L^1} } d\bdz d\bdx\\
&\leq {d^{p}\over \|\Gamma^\delta\|_{L^1} }  \int_{\mathbb{R}^{d}}\int_{\mathbb{R}^{d}} \left|{\bdz\over |\bdz|}\cdot({\bf u}_n(\bdx) - {\bf u}_n(\bdz +\bdx)) \right|^{p}  {\rho({\bf z}) \over |{\bf z}|^{p}}  d\bdz d\bdx\\
&= {d^{p}\over \|\Gamma^\delta\|_{L^1} }|{\bf u}_{n}|^p_{\mathcal{S}_{\rho,p}(\mathbb{R}^{d})}.
\end{aligned}
\]
By assumption on the sequence $\{{\bf u}_n\}$ \eqref{mainconditioncompact-appen}, we have  that for all $n$,  
\[
\| {\bf u}_{n} - \mathbb{G}^{\delta} \ast{\bf u}_n\|_{L^{p}}^{p} \leq C \,{d^{p}\over \|\Gamma^\delta\|_{L^1}}.  
\]
We take the limit as $\delta\to 0$ (uniformly in $n$) and use Lemma \ref{RFK-berzis} to conclude that ${\bf u}_n$ is compact in $L^{p}(\Omega;\mathbb{R}^{d}).$
\end{proof}

\end{document}